\newtheorem{lemma}{Lemma}[section]
\newtheorem{claim}{Claim}[section]
\begin{document}
\thispagestyle{empty}

{\noindent \LARGE Mappings preserving products $ab\pm ba^{*}$ on alternative $W^{*}$-factors}
\vspace{.2in}
\begin{center}
{\bf Jo\~{a}o Carlos da Motta Ferreira\\ and \\Maria das Gra\c{c}as Bruno Marietto}\\
\vspace{.2in}
{\it Center for Mathematics, Computation and Cognition,\\
Federal University of ABC,\\
Avenida dos Estados, 5001, \\
09210-580, Santo Andr\'e, Brazil.}\\
e-mail: joao.cmferreira@ufabc.edu.br, graca.marietto@ufabc.edu.br\\
\end{center}

\begin{abstract} Let $\mathcal{A}$ and $\mathcal{B}$ be two alternative $W^{*}$-factors. In this paper, we proved that a bijective mapping $\Phi :\mathcal{A}\rightarrow \mathcal{B}$ satisfies $\Phi (ab+ba^{*})=\Phi (a)\Phi (b)+\Phi (b)\Phi (a)^{*}$ (resp., $\Phi (ab-ba^{*})=\Phi (a)\Phi (b)-\Phi (b)\Phi (a)^{*}$), for all elements $a,b\in \mathcal{A}$, if and only if $\Phi $ is a $\ast $-ring isomorphism.
\end{abstract}
{\bf 2010 Mathematics Subject Classification:} 47B99, 46K70\\
{\bf Keywords:} alternative $C^{*}$-algebras, alternative $W^{*}$-algebras, alternative $W^{*}$-factors, $\ast $-ring isomorphisms\\\\
{\bf Abbreviated title:} $\ast $-ring isomorphisms on alternative $W^{*}$-factors

\section{Introduction}

A ring $\mathcal{R}$ not necessarily associative or commutative is called an {\it alternative ring} if it satisfies the identities $a^{2}b=a(ab)$ and $ba^{2}=(ba)a,$ for all elements $a,b\in \mathcal{R}.$ One easily sees that any associative ring is an alternative ring. An alternative ring $\mathcal{R}$ is called {\it prime} if for any elements $a,b\in \mathcal{R}$ satisfying the condition $a\mathcal{R}b=0,$ then either $a=0$ or $b=0.$

Let $\mathcal{R}$ and $\mathcal{S}$ be alternative rings. We say that a mapping $\Phi:\mathcal{R}\rightarrow \mathcal{S}$ {\it preserves product} if $\Phi (ab)=\Phi (a)\Phi (b),$ for all elements $a,b\in \mathcal{R},$ and that it {\it preserves Jordan product} (resp., {\it preserves Lie product}) if $\Phi (ab+ba)=\Phi (a)\Phi (b)+\Phi (b)\Phi (a)$ (resp., $\Phi (ab-ba)=\Phi (a)\Phi (b)-\Phi (b)\Phi (a)$), for all elements $a,b\in \mathcal{R}.$ 

We say that a mapping $\Phi:\mathcal{R}\rightarrow \mathcal{S}$ is {\it additive} if $\Phi (a+b)=\Phi (a)+\Phi (b),$ for all elements $a,b\in \mathcal{R}$ and that it is a {\it ring isomorphism} if $\Phi $ is an additive bijection that preserves products.

Let $\mathcal{R}$ and $\mathcal{S}$ be alternative $\ast $-rings. We say that a mapping $\Phi:\mathcal{R}\rightarrow \mathcal{S}$ {\it preserves involution} if $\Phi (a^{*})=\Phi (a)^{*},$ for all elements $a\in \mathcal{R},$ and that $\Phi $ is a {\it $\ast $-ring isomorphism} if $\Phi $ is a ring isomorphism that preserves involution.

There has been a great interest in the study of additivity or characterization of isomorphisms of mappings that preserve product or Jordan product (resp., Lie product) on associative and non-associative rings as well as operator algebras (for example, see \cite{BrunGuz}, \cite{FerGuz1}, \cite{FerGuz2}, \cite{Ji}, \cite{Ji2}, \cite{Lu1}, \cite{Lu2}, \cite{Martindale}, \cite{Martindale1}, \cite{Miers1}, \cite{Miers2} and \cite{Zhang}). As in the case of associative rings, many mathematicians devoted themselves to study mappings preserving new products on operator algebras with involution (for example, see \cite{Bai}, \cite{Cui}, \cite{FerMar} and \cite{Li}). In particular, Li et al. \cite{Li} (resp., Cui and Li \cite{Cui}) studied bijective mappings preserving the new product $ab+ba^{*}$ (resp., $ab-ba^{*}$). They showed that such mappings on factor von Neumann algebras are $\ast $-ring isomorphisms.

A complete normed alternative complex $\ast $-algebra $\mathcal{A}$ is called of {\it alternative $C^{*}$-algebra} if it satisfies the condition: $\|a^{*}a\|=\|a\|^{2},$ for all elements $a\in \mathcal{A}.$ A non-zero element $p\in \mathcal{A}$ is called a {\it projection} if it is self-adjoint and verifies the condition $p^{2}=p.$ Alternative $C^{*}$-algebras are non-associative generalizations of $C^{*}$-algebras and appear in various areas in Mathematics (see more details in the references \cite{Cabrera1} and \cite{Cabrera2}). An alternative $C^{*}$-algebra $\mathcal{A}$ is called of {\it alternative $W^{*}$-algebra} if it is a dual Banach space and a prime alternative $W^{*}$-algebra is called {\it alternative $W^{*}$-factor}. It is well known that non-zero alternative $W^{*}$-algebras are unital and that an alternative $W^{*}$-algebra $\mathcal{A}$ is a factor if and only if its center is equal to $\mathds{C}1_{\mathcal{A}},$ where $1_{\mathcal{A}}$ is the unit of $\mathcal{A}.$ 

Similarly to the research performed in \cite{Li} (resp., \cite{Cui}), the aim of the present paper is to investigate when a bijective mapping preserving product $ab+ba^{*}$ (resp., $ab-ba^{*}$) on an alternative $W^{*}$-factor is a $\ast $-ring isomorphism.

Let $\mathcal{A}$ and $\mathcal{B}$ be two alternative complex $\ast $-algebras. We say that a mapping $\Phi:\mathcal{A}\rightarrow \mathcal{B}$ {\it preserves product $ab+ba^{*}$} (resp., {\it preserves product $ab-ba^{*}$}) if
{\allowdisplaybreaks\begin{multline}\allowdisplaybreaks\label{fundident}
\Phi (ab+ba^{*})=\Phi (a)\Phi (b)+\Phi (b)\Phi (a)^{*}\\ (\textrm{resp.},\, \Phi (ab-ba^{*})=\Phi (a)\Phi (b)-\Phi (b)\Phi (a)^{*}),
\end{multline}}
for all elements $a,b\in \mathcal{A}.$

Our main result reads as follows.

\noindent {\bf Main Theorem.} {\it Let $\mathcal{A}$ and $\mathcal{B}$ be two alternative $W^{*}$-factors with $1_{\mathcal{A}}$ and $1_{\mathcal{B}}$ the identities of them. Then a bijective mapping $\Phi:\mathcal{A}\rightarrow \mathcal{B}$ satisfies $\Phi (ab+ba^{*})=\Phi (a)\Phi (b)+\Phi (b)\Phi (a)^{*}$ (resp., $\Phi (ab-ba^{*})=\Phi (a)\Phi (b)-\Phi (b)\Phi (a)^{*}$), for all elements $a,b\in \mathcal{ A},$ if and only if $\Phi $ is a $\ast $-ring isomorphism}.

\section{The proof of the Main Theorem}

The proof of the Main Theorem is made by proving several lemmas and taking into account separately the products $ab+ba^{*}$ and $ab-ba^{*}.$ We begin with the following lemma.

\begin{lemma}\label{lem21} Let $\mathcal{A}$ and $\mathcal{B}$ be two alternative complex $\ast $-algebras and $\Phi:\mathcal{A}\rightarrow \mathcal{B}$ a mapping that preserves product $ab+ba^{*}$ (resp., $ab-ba^{*}$). If $\Phi (f)=\Phi (a)+\Phi (b),$ for elements $a,b,f$ of $\mathcal{A},$ then hold the following identities: (i) $\Phi (ft+tf^{*})=\Phi (at+ta^{*})+\Phi (bt+tb^{*})$ and (ii) $\Phi (tf+ft^{*})=\Phi (ta+at^{*})+\Phi (tb+bt^{*})$ (resp., (i) $\Phi (ft-tf^{*})=\Phi (at-ta^{*})+\Phi (bt-tb^{*})$ and (ii) $\Phi (tf-ft^{*})=\Phi (ta-at^{*})+\Phi (tb-bt^{*})$), for all elements $t$ of $\mathcal{A}.$
\end{lemma}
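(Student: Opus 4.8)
The plan is to prove both identities by a direct computation that applies the defining property \eqref{fundident} to the left-hand side, substitutes the hypothesis $\Phi(f)=\Phi(a)+\Phi(b)$, and then runs \eqref{fundident} backwards on each of the resulting summands. I treat the product $ab+ba^{*}$ case; the $ab-ba^{*}$ case is identical up to signs.

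For (i), I would first apply the defining identity to the pair $(f,t)$ to obtain $\Phi(ft+tf^{*})=\Phi(f)\Phi(t)+\Phi(t)\Phi(f)^{*}$. Substituting $\Phi(f)=\Phi(a)+\Phi(b)$ and using that the involution on $\mathcal{B}$ is additive (being conjugate-linear) gives $\Phi(f)^{*}=\Phi(a)^{*}+\Phi(b)^{*}$. Expanding by the left and right distributive laws, which are valid in any alternative ring since associativity is not needed for distributivity, yields four terms that I regroup as $[\Phi(a)\Phi(t)+\Phi(t)\Phi(a)^{*}]+[\Phi(b)\Phi(t)+\Phi(t)\Phi(b)^{*}]$. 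Applying \eqref{fundident} in reverse to each bracket, with the pairs $(a,t)$ and $(b,t)$, identifies these two expressions with $\Phi(at+ta^{*})$ and $\Phi(bt+tb^{*})$ respectively, which is exactly (i).

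For (ii), the only change is the order of the arguments: I apply \eqref{fundident} with the pair $(t,f)$ to get $\Phi(tf+ft^{*})=\Phi(t)\Phi(f)+\Phi(f)\Phi(t)^{*}$, substitute $\Phi(f)=\Phi(a)+\Phi(b)$, expand by distributivity as before, and regroup as $[\Phi(t)\Phi(a)+\Phi(a)\Phi(t)^{*}]+[\Phi(t)\Phi(b)+\Phi(b)\Phi(t)^{*}]$. Running \eqref{fundident} backwards with the pairs $(t,a)$ and $(t,b)$ then gives $\Phi(ta+at^{*})+\Phi(tb+bt^{*})$, establishing (ii).

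There is no serious obstacle here, as the argument is a purely formal manipulation of the defining identity. The only points requiring care are the additivity of the involution, which I use to split $\Phi(f)^{*}$, and the observation that only distributivity and not associativity is invoked, so that the computation carries over verbatim to the alternative (non-associative) setting. The $ab-ba^{*}$ version is obtained by replacing each $+$ with $-$ in the products throughout.
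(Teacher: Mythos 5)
Your proposal is correct and follows exactly the paper's own argument: apply the defining identity to the pair $(f,t)$ (resp.\ $(t,f)$), substitute $\Phi(f)=\Phi(a)+\Phi(b)$, expand by distributivity and additivity of the involution, and read the identity backwards on each grouped summand. No differences worth noting.
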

\begin{proof} First case: the product $ab+ba^{*}.$ For an arbitrary element $t$ of $\mathcal{A},$ we have
{\allowdisplaybreaks\begin{multline*}\allowdisplaybreaks
\Phi (ft+tf^{*})=\Phi (f)\Phi (t)+\Phi (t)\Phi (f)^{*}\\=(\Phi (a)+\Phi (b))\Phi (t)+\Phi (t)(\Phi (a)+\Phi (b))^{*}\\
=\Phi (a)\Phi (t)+\Phi (t)\Phi (a)^{*}+\Phi (b)\Phi (t)+\Phi (t)\Phi (b)^{*}\\
=\Phi (at+ta^{*})+\Phi (bt+tb^{*}).
\end{multline*}}
Similarly, we prove (ii).

\noindent Second case: the product $ab-ba^{*}.$ The proofs are similar to those in previous.

\end{proof}

\begin{lemma}\label{lem22} Let $\mathcal{A}$ and $\mathcal{B}$ be two alternative complex $\ast $-algebras and $\Phi:\mathcal{A}\rightarrow \mathcal{B}$ a surjective mapping that preserves product $ab+ba^{*}$ (resp., $ab-ba^{*}$). Then $\Phi (0)=0.$
\end{lemma}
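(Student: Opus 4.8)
The plan is to exploit surjectivity to manufacture a preimage of $0$ and then substitute it into the defining identity, where it will act as an absorbing factor. First I would use the surjectivity of $\Phi$ to fix an element $x\in\mathcal{A}$ with $\Phi(x)=0$. The reason this is the right object to consider is that every term on the right-hand side of the functional equation \eqref{fundident} carries a factor of either $\Phi(a)$, $\Phi(a)^{*}$, or the analogous factors in the two slots; choosing the first argument to be $x$ will force those factors to vanish. Note that in any complex $\ast$-algebra the involution is additive, so $0^{*}=0$, a trivial fact the computation will quietly use.

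Next I would evaluate the hypothesis at $a=x$ and $b=0$. On the argument side the expression collapses, since $x\cdot 0+0\cdot x^{*}=0$, so the left-hand side is exactly $\Phi(0)$. On the other side one obtains $\Phi(x)\Phi(0)+\Phi(0)\Phi(x)^{*}=0\cdot\Phi(0)+\Phi(0)\cdot 0=0$, using $\Phi(x)=0$ together with $0^{*}=0$. Comparing the two sides gives $\Phi(0)=0$. For the product $ab-ba^{*}$ the very same substitution produces $\Phi(0)=\Phi(x)\Phi(0)-\Phi(0)\Phi(x)^{*}=0$, so the two cases are handled identically and no separate argument is needed.

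I do not anticipate any genuine obstacle here: the whole proof is a single evaluation of the functional equation at a carefully chosen pair of arguments, and it uses nothing beyond the ring multiplication and the elementary identity $0^{*}=0$. In particular it never invokes associativity, so it is valid verbatim in the alternative setting. The one hypothesis that must not be overlooked is \emph{surjectivity}, which is precisely what guarantees the existence of the preimage $x$; without it one could not introduce a zero factor into the product formula, and the statement would not follow from the preservation identity alone.
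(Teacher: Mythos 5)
Your proof is correct and follows essentially the same route as the paper: use surjectivity to pick a preimage $x$ of $0$ and substitute into the preservation identity so that every term on the right acquires a zero factor. The only cosmetic difference is that you take $(a,b)=(x,0)$ while the paper takes $(a,b)=(0,x)$; both substitutions collapse the identity to $\Phi(0)=0$ in one line.
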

\begin{proof} First case: the product $ab+ba^{*}.$ From the surjectivity of $\Phi ,$ choose an element $b\in \mathcal{A}$ such that $\Phi (b)=0.$ It follows that
{\allowdisplaybreaks\begin{eqnarray*}\allowdisplaybreaks
&\Phi (0)=\Phi (0b+b0^{*})=\Phi (0)\Phi (b)+\Phi (b)\Phi (0)^{*}=\Phi (0)0+0\Phi (0)^{*}=0.
\end{eqnarray*}}

\noindent Second case: the product $ab-ba^{*}.$ The proof is similar to the previous one. 
\end{proof}

\begin{lemma}\label{lem23} Let $\mathcal{A}$ and $\mathcal{B}$ be two alternative complex $\ast $-algebras such that $\mathcal{A}$ is an alternative $W^{*}$-factor with identity $1_{\mathcal{A}}.$ Then every bijective mapping $\Phi:\mathcal{A}\rightarrow \mathcal{B}$ that preserves product $ab+ba^{*}$ (resp., $ab-ba^{*}$) is additive.
\end{lemma}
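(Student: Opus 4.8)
The plan is to fix a nontrivial projection $p_1 \in \mathcal{A}$ and decompose $\mathcal{A}$ along its Peirce components relative to $p_1$ and $p_2 = 1_{\mathcal{A}} - p_1$, writing $\mathcal{A} = \mathcal{A}_{11} \oplus \mathcal{A}_{12} \oplus \mathcal{A}_{21} \oplus \mathcal{A}_{22}$ with $\mathcal{A}_{ij} = p_i \mathcal{A} p_j$. Such a projection is available because $\mathcal{A}$ is an alternative $W^{*}$-factor (the degenerate situation $\mathcal{A} = \mathds{C}1_{\mathcal{A}}$, should it occur, is treated directly). The factor hypothesis, equivalently that the center equals $\mathds{C}1_{\mathcal{A}}$, together with primeness, will be the structural inputs that let me convert statements about equal products into statements about equal elements.

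The engine of the argument is the combination of Lemma \ref{lem21} with bijectivity. To establish $\Phi(x+y) = \Phi(x) + \Phi(y)$ for a prescribed pair, I would first use surjectivity to choose $f$ with $\Phi(f) = \Phi(x) + \Phi(y)$; Lemma \ref{lem21} then supplies $\Phi(ft + tf^{*}) = \Phi(xt+tx^{*}) + \Phi(yt+ty^{*})$ and the symmetric identity $\Phi(tf+ft^{*}) = \Phi(tx+xt^{*})+\Phi(ty+yt^{*})$ for every $t$. The goal is to deduce $f = x+y$, since injectivity then closes the case. I would run this scheme through the Peirce grading in stages: additivity on a single off-diagonal block $\mathcal{A}_{12}$ (and likewise $\mathcal{A}_{21}$); then the cross-block identity $\Phi(\sum_{i,j} a_{ij}) = \sum_{i,j}\Phi(a_{ij})$; then additivity on the diagonal blocks $\mathcal{A}_{11}$ and $\mathcal{A}_{22}$; and finally the general case by resolving arbitrary elements into their Peirce components and assembling the pieces. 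At each stage the free variable $t$ is specialized to the projections $p_i$ or to elements of a single block so that most of the product terms collapse.

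The decisive mechanism at each step is a separation argument: after comparing products I reduce to an element $h = f - (x+y)$ satisfying $ht + th^{*} = 0$ and $th + ht^{*} = 0$ for all $t$, and I must force $h = 0$. Setting $t = 1_{\mathcal{A}}$ shows $h$ is skew-adjoint, so the relations become commutation conditions and the factor property confines $h$ to an imaginary multiple of $1_{\mathcal{A}}$; removing this residual ambiguity, by testing against projections and by using the two identities of Lemma \ref{lem21} jointly, is the delicate point that the factor and primeness hypotheses are designed to resolve.

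The step I expect to be the main obstacle is the non-associativity. Every manipulation that in an associative von Neumann factor would be a one-line reassociation must instead be justified through the alternative identities $a^{2}b = a(ab)$ and $ba^{2} = (ba)a$ and their Moufang consequences, and the Peirce calculus for alternative rings is genuinely weaker than in the associative setting (products of Peirce blocks need not land exactly where associative bookkeeping would predict). Maintaining a consistent Peirce arithmetic under these weaker rules, while simultaneously carrying the involution through the products $ab \pm ba^{*}$, is where the real work lies; the two cases $ab+ba^{*}$ and $ab-ba^{*}$ run in parallel, differing only by sign bookkeeping.
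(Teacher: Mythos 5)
Your overall skeleton --- fix a nontrivial projection, pass to the Peirce decomposition $\mathcal{A}=\mathcal{A}_{11}\oplus\mathcal{A}_{12}\oplus\mathcal{A}_{21}\oplus\mathcal{A}_{22}$, pick $f$ with $\Phi(f)=\Phi(x)+\Phi(y)$, and use Lemma \ref{lem21} together with injectivity to force $f=x+y$ block by block --- is exactly the paper's strategy. But the mechanism you designate as ``decisive'' is circular as stated. From Lemma \ref{lem21} you only get $\Phi(ft+tf^{*})=\Phi(xt+tx^{*})+\Phi(yt+ty^{*})$; to pass from this to $ft+tf^{*}=(x+y)t+t(x+y)^{*}$ (i.e.\ to your element $h=f-(x+y)$ satisfying $ht+th^{*}=0$ for all $t$) you must first collapse the right-hand side to $\Phi\bigl((x+y)t+t(x+y)^{*}\bigr)$, and that is precisely an instance of the additivity you are trying to prove. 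The paper escapes this by never using the identity ``for all $t$'': it specializes $t$ to $p_{1}$, $p_{2}$, or to elements of a single Peirce block chosen so that one of the two terms on the right vanishes (e.g.\ $p_{2}a_{11}+a_{11}p_{2}^{*}=0$, so that summand is $\Phi(0)=0$ by Lemma \ref{lem22}), or so that the recombination on the right is an instance of partial additivity already established at an earlier stage. Each Peirce component of $f$ is then pinned down one at a time using directness of the decomposition and the faithfulness property ($a_{ij}t_{jk}=0$ for all $t_{jk}$ forces $a_{ij}=0$); the reduction to a central skew-adjoint $h$ and Lemma \ref{lem24} never enters the additivity proof at all.

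A second, related gap: your scheme has no tool for additivity \emph{within} a single off-diagonal block, say $\Phi(a_{12}+b_{12})=\Phi(a_{12})+\Phi(b_{12})$. Here no specialization of $t$ to a projection separates $a_{12}$ from $b_{12}$, and the separation argument cannot even get started. The paper handles this (Claims \ref{claim25} and \ref{claim26}) with a different, Martindale-type idea: expand the single product
\[
(p_{1}+a_{12})(p_{2}+b_{12})+(p_{2}+b_{12})(p_{1}+a_{12})^{*}=a_{12}+b_{12}+a_{12}b_{12}+a_{12}^{*}+b_{12}a_{12}^{*},
\]
apply $\Phi$ to both sides, use the already-proved additivity of $\Phi$ on sums of elements from \emph{distinct} Peirce blocks (Claim \ref{claim24}) to break both sides apart, and cancel. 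Without this identity (or a substitute), the within-block cases --- and hence the diagonal cases built on them in Claim \ref{claim27} --- do not follow from what you describe. Your remarks on non-associativity are reasonable but not where the difficulty lies; the Peirce calculus for alternative rings quoted from the references is strong enough that the bookkeeping goes through essentially as in the associative case.
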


We will establish the proof of Lemma \ref{lem23} in a series of Claims and taking into account separately the products $ab+ba^{*}$ and $ab-ba^{*},$ based on the techniques used by Ferreira and Marietto \cite{FerMar} and Li et al. \cite{Li} (resp., Cui and Li \cite{Cui}). We begin, though, with a well-known result that will be used throughout this paper.

Let $p_{1}$ be a non-trivial projection of $\mathcal{A}$ and write $p_{2}=1_{\mathcal{A}}-p_{1}.$ Then $\mathcal{A}$ has a Peirce decomposition $\mathcal{A}=\mathcal{A}_{11}\oplus \mathcal{A}_{12}\oplus \mathcal{A}_{21}\oplus \mathcal{A}_{22},$ where $\mathcal{A}_{ij}=p_{i}\mathcal{A}p_{j}$ $(i,j=1,2) ,$ satisfying the following multiplicative relations: (i) $\mathcal{A}_{ij}\mathcal{A}_{jl}\subseteq \mathcal{A}_{il}$ $(i,j,l=1,2),$ (ii) $\mathcal{A}_{ij}\mathcal{A}_{ij}\subseteq \mathcal{A}_{ji}$ $(i,j=1,2),$ (iii) $\mathcal{A}_{ij}\mathcal{A}_{kl}=0$ if $j\neq k$ and $(i,j)\neq (k,l)$ $(i,j,k,l=1,2)$ and (iv) $a_{ij}^{2}=0$ for all elements $a_{ij}\in \mathcal{A}_{ij}$ $(i,j=1,2;i\neq j).$ Moreover, it follows from \cite[Theorem 2.2]{FerGuz1} that if $a_{ij}t_{jk}=0$ for each $t_{jk}\in \mathcal{A}_{jk}$ $(i,j,k=1,2),$ then $a_{ij}=0.$ Dually, if $t_{ki}a_{ij}=0$ for each $t_{ki}\in \mathcal{A}_{ki}$ $(i,j,k=1,2),$ then $a_{ij}=0.$

\begin{claim}\label{claim21} For arbitrary elements $a_{11}\in \mathcal{A}_{11},$ $b_{12}\in \mathcal{A}_{12},$ $c_{21}\in \mathcal{A}_{21}$ and $d_{22}\in \mathcal{A}_{22}$ hold: (i) $\Phi (a_{11}+b_{12})=\Phi (a_{11})+ \Phi (b_{12}),$ (ii) $\Phi (a_{11}+c_{21})=\Phi (a_{11})+ \Phi (c_{21}),$ (iii) $\Phi (b_{12}+d_{22})=\Phi (b_{12})+\Phi (d_{22})$ and (iv) $\Phi (c_{21}+d_{22})=\Phi (c_{21})+\Phi (d_{22}).$
\end{claim}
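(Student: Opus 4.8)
The plan is to prove the four identities by the usual surjectivity-plus-injectivity scheme, treating (i) in detail; parts (ii)--(iv) then follow by interchanging the roles of $p_{1}$ and $p_{2}$ and by using the two symmetric conclusions of Lemma~\ref{lem21}, and the $ab-ba^{*}$ case is identical with the evident sign changes. Fix $a_{11}\in\mathcal{A}_{11}$ and $b_{12}\in\mathcal{A}_{12}$. By surjectivity of $\Phi$ choose $f\in\mathcal{A}$ with $\Phi(f)=\Phi(a_{11})+\Phi(b_{12})$, and write $f=f_{11}+f_{12}+f_{21}+f_{22}$ in its Peirce decomposition. The goal is to show $f=a_{11}+b_{12}$; injectivity of $\Phi$ then yields (i).

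The engine is Lemma~\ref{lem21}: since $\Phi(f)=\Phi(a_{11})+\Phi(b_{12})$, for every $t\in\mathcal{A}$ we have $\Phi(ft+tf^{*})=\Phi(a_{11}t+ta_{11}^{*})+\Phi(b_{12}t+tb_{12}^{*})$ together with the dual identity for $tf+ft^{*}$. I would feed in test elements $t$ drawn from the individual Peirce spaces $\mathcal{A}_{kl}$, chosen so that one of the two summands on the right collapses: taking $t=t_{22}$ kills the $a_{11}$-term (as $a_{11}t_{22}=t_{22}a_{11}^{*}=0$), taking $t=t_{11}$ kills the $b_{12}$-term, and taking $t=t_{12}$ in the dual identity again kills the $a_{11}$-term. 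Each such choice leaves a single $\Phi$-value on the right, so injectivity lets me equate the two arguments; matching Peirce components and invoking the annihilator condition recalled above ($x_{ij}t_{jk}=0$ for all $t_{jk}$ forces $x_{ij}=0$, and dually) then reads off the components of $f$ one at a time. In this way the choice $t_{22}$ gives $f_{12}=b_{12}$, the choice $t_{11}$ gives $f_{21}=0$, and the dual test with $t_{12}$ gives $f_{22}=0$.

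The one block that does not fall out this way is $f_{11}$. The surviving identities only force $(f_{11}-a_{11})t_{11}+t_{11}(f_{11}-a_{11})^{*}=0$ for all $t_{11}\in\mathcal{A}_{11}$; putting $t_{11}=p_{1}$ shows that $g:=f_{11}-a_{11}$ is skew ($g^{*}=-g$), after which the identity reads $[g,t_{11}]=0$, i.e. $g$ lies in the centre of the corner $\mathcal{A}_{11}=p_{1}\mathcal{A}p_{1}$. This is the main obstacle, and it is exactly where the factor hypothesis must enter: because $\mathcal{A}$ is a factor the centre of $\mathcal{A}_{11}$ reduces to $\mathds{C}p_{1}$, so $g=irp_{1}$ for some $r\in\mathds{R}$, and it remains to force $r=0$. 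No single collapsed test detects this scalar, since it is invisible to every product landing off the $(1,1)$-block and is central-skew on that block; I therefore expect to eliminate it by a more global argument — feeding $f$ back into the functional equation against a carefully mixed $t$ (with a second application of Lemma~\ref{lem21}) or exploiting primeness together with the injectivity of $\Phi$ — and I would regard pinning down this residual imaginary scalar multiple of $p_{1}$ as the genuinely delicate point of the proof.
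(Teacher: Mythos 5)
Your overall scheme (surjectivity to produce $f$, Lemma~\ref{lem21} with collapsing test elements, injectivity, the annihilator property) is the paper's scheme, and your treatment of the off-diagonal and $(2,2)$ blocks is sound: the tests with $t_{22}$, $t_{11}$ and the dual test with $t_{12}$ do give $f_{12}=b_{12}$, $f_{21}=0$, $f_{22}=0$ (the paper gets all three in one stroke by cutting with $t=p_{2}$ in Lemma~\ref{lem21}(ii), since $p_{2}f+fp_{2}^{*}=b_{12}$ forces those components directly). But there is a genuine gap at the $(1,1)$ block, and it is exactly the point you flag as unresolved. You restrict yourself to test elements $t_{11}\in\mathcal{A}_{11}$, which, as you correctly observe, can never detect a skew central perturbation $g=f_{11}-a_{11}$; you then gesture at a centre-of-the-corner argument plus "a more global argument" to kill the residual $i r p_{1}$, but you do not supply it. This leaves the main assertion of the claim unproved.

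The missing idea is to test $f_{11}$ against the \emph{off-diagonal} space instead: apply Lemma~\ref{lem21}(ii) to $\Phi(f_{11}+b_{12})=\Phi(a_{11})+\Phi(b_{12})$ with $t=d_{21}\in\mathcal{A}_{21}$. Here neither summand on the right collapses, so your "one term dies, apply injectivity" template does not apply; the extra technique you need is to apply Lemma~\ref{lem21} \emph{again} to the resulting three-term identity, cutting first with $p_{2}$ and then with $p_{1}$, which strips away all terms except
\begin{equation*}
\Phi\bigl(f_{11}d_{21}^{*}+d_{21}f_{11}^{*}\bigr)=\Phi\bigl(a_{11}d_{21}^{*}+d_{21}a_{11}^{*}\bigr).
\end{equation*}
Now injectivity applies, and since $f_{11}d_{21}^{*}\in\mathcal{A}_{12}$ while $d_{21}f_{11}^{*}\in\mathcal{A}_{21}$, directness of the Peirce decomposition separates the two pieces and yields $(f_{11}-a_{11})d_{21}^{*}=0$ for every $d_{21}\in\mathcal{A}_{21}$, i.e. $(f_{11}-a_{11})t_{12}=0$ for every $t_{12}\in\mathcal{A}_{12}$. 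The annihilator property you already quoted (the consequence of primeness from \cite[Theorem 2.2]{FerGuz1}) then gives $f_{11}=a_{11}$ outright. Note that this also disposes of your residual scalar: $(irp_{1})t_{12}=irt_{12}\neq 0$ for $t_{12}\neq 0$, so the $\mathcal{A}_{21}$-test sees precisely what the $\mathcal{A}_{11}$-tests cannot. No description of the centre of $\mathcal{A}_{11}$ is needed, and the factor hypothesis enters only through the annihilator property.
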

\begin{proof} First case: the product $ab+ba^{*}.$ From the surjectivity of $\Phi $ there exists an element $f=f_{11}+f_{12}+f_{21}+f_{22}\in \mathcal{A}$ such that $\Phi (f)=\Phi (a_{11}) + \Phi (b_{12})$. By Lemma \ref{lem21}(ii), we have
{\allowdisplaybreaks\begin{eqnarray*}\allowdisplaybreaks
&\Phi (p_{2}f+fp_{2}^{*})=\Phi (p_{2}a_{11}+a_{11}p_{2}^{*})+\Phi (p_{2}b_{12}+b_{12}p_{2}^{*})=\Phi (b_{12}).
\end{eqnarray*}}
This implies that $p_{2}f+fp_{2}^{*}=b_{12}$ which results that $f_{12}=b_{12},$ $f_{21}=0$ and $f_{22}=0,$ by directness of the Peirce decomposition. It follows that $\Phi (f_{11}+b_{12})=\Phi (a_{11})+\Phi (b_{12}).$ Hence, for an arbitrary element $d_{21}\in  \mathcal{A}_{21}$ we have
{\allowdisplaybreaks\begin{eqnarray*}\allowdisplaybreaks
&\Phi (d_{21}(f_{11}+b_{12})+(f_{11}+b_{12})d_{21}^{*})=\Phi (d_{21}a_{11}+a_{11}d_{21}^{*})+\Phi (d_{21}b_{12}+b_{12}d_{21}^{*})
\end{eqnarray*}}
which yields that 
{\allowdisplaybreaks\begin{multline*}\allowdisplaybreaks
\Phi ((d_{21}(f_{11}+b_{12})+(f_{11}+b_{12})d_{21}^{*})p_{2}+p_{2}(d_{21}(f_{11}+b_{12})+(f_{11}+b_{12})d_{21}^{*})^{*})\\
=\Phi ((d_{21}a_{11}+a_{11}d_{21}^{*})p_{2}+p_{2}(d_{21}a_{11}+a_{11}d_{21}^{*})^{*})\\
+\Phi ((d_{21}b_{12}+b_{12}d_{21}^{*})p_{2}+
p_{2}(d_{21}b_{12}+b_{12}d_{21}^{*})^{*}).
\end{multline*}}
As consequence, we obtain
{\allowdisplaybreaks\begin{multline*}\allowdisplaybreaks
\Phi (d_{21}b_{12}+f_{11}d_{21}^{*}+b_{12}^{*}d_{21}^{*}+d_{21}f_{11}^{*})\\
=\Phi (a_{11}d_{21}^{*}+d_{21}a_{11}^{*})+\Phi (d_{21}b_{12}+b_{12}^{*}d_{21}^{*}).
\end{multline*}}
It follows from this that
{\allowdisplaybreaks\begin{multline*}\allowdisplaybreaks
\Phi (p_{1}(d_{21}b_{12}+f_{11}d_{21}^{*}+b_{12}^{*}d_{21}^{*}+d_{21}f_{11}^{*})+(d_{21}b_{12}+f_{11}d_{21}^{*}+b_{12}^{*}d_{21}^{*}+d_{21}f_{11}^{*})p_{1}^{*})\\
=\Phi (p_{1}(a_{11}d_{21}^{*}+d_{21}a_{11}^{*})+(a_{11}d_{21}^{*}+d_{21}a_{11}^{*})p_{1}^{*})\\+\Phi (p_{1}(d_{21}b_{12}+b_{12}^{*}d_{21}^{*})+(d_{21}b_{12}+b_{12}^{*}d_{21}^{*})p_{1}^{*})
\end{multline*}}
which implies that
{\allowdisplaybreaks\begin{eqnarray*}\allowdisplaybreaks
\Phi (f_{11}d_{21}^{*}+d_{21}f_{11}^{*})=\Phi (a_{11}d_{21}^{*}+d_{21}a_{11}^{*}).
\end{eqnarray*}}
From the injectivity of $\Phi ,$ we obtain $f_{11}d_{21}^{*}+d_{21}f_{11}^{*}=a_{11}d_{21}^{*}+d_{21}a_{11}^{*}$ which show that $f_{11}=a_{11}.$ Now, choose $f=f_{11}+f_{12}+f_{21}+f_{22}\in \mathcal{A}$ such that $\Phi (f)=\Phi (a_{11}) + \Phi (c_{21})$. By Lemma \ref{lem21}(ii) again, we have
{\allowdisplaybreaks\begin{eqnarray*}\allowdisplaybreaks
&\Phi (p_{2}f+fp_{2}^{*})=\Phi (p_{2}a_{11}+a_{11}p_{2}^{*})+\Phi (p_{2}c_{21}+c_{21}p_{2}^{*})=\Phi (c_{21}).
\end{eqnarray*}}
which results that $p_{2}f+fp_{2}^{*}=c_{21}.$ From this we conclude that $f_{12}=0,$ $f_{21}=c_{21}$ and $f_{22}=0.$ As consequence, we obtain $\Phi (f_{11}+c_{21})=\Phi (a_{11})+\Phi (c_{21}).$ Hence, for an arbitrary element $d_{21}\in  \mathcal{A}_{21}$ we have
{\allowdisplaybreaks\begin{eqnarray*}\allowdisplaybreaks
&\Phi (d_{21}(f_{11}+c_{21})+(f_{11}+c_{21})d_{21}^{*})=\Phi (d_{21}a_{11}+a_{11}d_{21}^{*})+\Phi (d_{21}c_{21}+c_{21}d_{21}^{*})
\end{eqnarray*}}
which leads to 
{\allowdisplaybreaks\begin{multline*}\allowdisplaybreaks
\Phi ((d_{21}(f_{11}+c_{21})+(f_{11}+c_{21})d_{21}^{*})p_{1}+p_{1}(d_{21}(f_{11}+c_{21})+(f_{11}+c_{21})d_{21}^{*})^{*})\\
=\Phi ((d_{21}a_{11}+a_{11}d_{21}^{*})p_{1}+p_{1}(d_{21}a_{11}+a_{11}d_{21}^{*})^{*})\\
+\Phi ((d_{21}c_{21}+c_{21}d_{21}^{*})p_{1}+p_{1}(d_{21}c_{21}+c_{21}d_{21}^{*})^{*})
\end{multline*}}
resulting in
{\allowdisplaybreaks\begin{eqnarray*}\allowdisplaybreaks
\Phi (d_{21}f_{11}+f_{11}^{*}d_{21}^{*})=\Phi (d_{21}a_{11}+a_{11}^{*}d_{21}^{*}).
\end{eqnarray*}}
From the injectivity of $\Phi ,$ we obtain $d_{21}f_{11}+f_{11}^{*}d_{21}^{*}=d_{21}a_{11}+a_{11}^{*}d_{21}^{*}$ which yields in $f_{11}=a_{11}.$

Similarly, we prove the cases (iii) and (iv).

\noindent Second case: the product $ab-ba^{*}.$ According to the hypothesis on $\Phi ,$ choose $f=f_{11}+f_{12}+f_{21}+f_{22}\in \mathcal{A}$ such that $\Phi (f)=\Phi (a_{11}) + \Phi (b_{12})$. By Lemma \ref{lem21}(ii), we have
{\allowdisplaybreaks\begin{eqnarray*}\allowdisplaybreaks
&\Phi (p_{1}f-fp_{1}^{*})=\Phi (p_{1}a_{11}-a_{11}p_{1}^{*})+\Phi (p_{1}b_{12}-b_{12}p_{1}^{*})=\Phi (b_{12}).
\end{eqnarray*}}
This implies that $p_{1}f-fp_{1}^{*}=b_{12}$ which results that $f_{12}=b_{12}$ and $f_{21}=0,$ by directness of the Peirce decomposition. We have thus shown that $\Phi (f_{11}+b_{12}+f_{22})=\Phi (a_{11})+\Phi (b_{12}).$ Next, for any element $d_{21}\in  \mathcal{A}_{12}$ we have
{\allowdisplaybreaks\begin{multline*}\allowdisplaybreaks
\Phi (d_{21}(f_{11}+b_{12}+f_{22})-(f_{11}+b_{12}+f_{22})d_{21}^{*})=\Phi (d_{21}a_{11}-a_{11}d_{21}^{*})\\+\Phi (d_{21}b_{12}-b_{12}d_{21}^{*})
\end{multline*}}
from which we get
{\allowdisplaybreaks\begin{multline*}\allowdisplaybreaks
\Phi (d_{21}f_{11}+d_{21}b_{12}-f_{11}d_{21}^{*}-b_{12}d_{21}^{*})=\Phi (d_{21}a_{11}-a_{11}d_{21}^{*})\\+\Phi (d_{21}b_{12}-b_{12}d_{21}^{*}).
\end{multline*}}
It follows from this that
{\allowdisplaybreaks\begin{multline*}\allowdisplaybreaks
\Phi ((d_{21}f_{11}+d_{21}b_{12}-f_{11}d_{21}^{*}-b_{12}d_{21}^{*})p_{2}-p_{2}(d_{21}f_{11}+d_{21}b_{12}-f_{11}d_{21}^{*}-b_{12}d_{21}^{*})^{*})\\=\Phi ((d_{21}a_{11}-a_{11}d_{21}^{*})p_{2}-p_{2}(d_{21}a_{11}-a_{11}d_{21}^{*})^{*})\\+\Phi ((d_{21}b_{12}-b_{12}d_{21}^{*})p_{2}-p_{2}(d_{21}b_{12}-b_{12}d_{21}^{*})^{*})
\end{multline*}}
which implies that
{\allowdisplaybreaks\begin{multline*}\allowdisplaybreaks
\Phi (d_{21}b_{12}-f_{11}d_{21}^{*}-b_{12}^{*}d_{21}^{*}+d_{21}f_{11}^{*})=\Phi (-a_{11}d_{21}^{*}+d_{21}a_{11}^{*})\\+\Phi (d_{21}b_{12}-b_{12}^{*}d_{21}^{*}).
\end{multline*}}
As a consequence, we obtain
{\allowdisplaybreaks\begin{multline*}\allowdisplaybreaks
\Phi ((d_{21}b_{12}-f_{11}d_{21}^{*}-b_{12}^{*}d_{21}^{*}+d_{21}f_{11}^{*})p_{1}-p_{1}(d_{21}b_{12}-f_{11}d_{21}^{*}-b_{12}^{*}d_{21}^{*}+d_{21}f_{11}^{*})^{*})\\=\Phi ((-a_{11}d_{21}^{*}+d_{21}a_{11}^{*})p_{1}-p_{1}(-a_{11}d_{21}^{*}+d_{21}a_{11}^{*})^{*})\\+\Phi ((d_{21}b_{12}-b_{12}^{*}d_{21}^{*})p_{1}-p_{1}(d_{21}b_{12}-b_{12}^{*}d_{21}^{*})^{*})
\end{multline*}}
from which we have that
{\allowdisplaybreaks\begin{eqnarray*}\allowdisplaybreaks
\Phi (d_{21}f_{11}^{*}-f_{11}d_{21}^{*})=\Phi (d_{21}a_{11}^{*}-a_{11}d_{21}^{*}).
\end{eqnarray*}} 
This show that $d_{21}f_{11}^{*}-f_{11}d_{21}^{*}=d_{21}a_{11}^{*}-a_{11}d_{21}^{*}$ which yields that $d_{21}f_{11}^{*}=d_{21}a_{11}^{*}$ and $f_{11}d_{21}^{*}=a_{11}d_{21}^{*}.$  As consequence, we obtain $f_{11}=a_{11}.$ We have thus shown that $\Phi (a_{11}+b_{12}+f_{22})=\Phi (a_{11})+\Phi (b_{12}).$ Hence, for any element $d_{12}\in  \mathcal{A}_{12}$ we have
{\allowdisplaybreaks\begin{multline*}\allowdisplaybreaks
\Phi (d_{12}(a_{11}+b_{12}+f_{22})-(a_{11}+b_{12}+f_{22})d_{12}^{*})=\Phi (d_{12}a_{11}-a_{11}d_{12}^{*})\\+\Phi (d_{12}b_{12}-b_{12}d_{12}^{*})=\Phi (d_{12}b_{12}-b_{12}d_{12}^{*})
\end{multline*}}
which implies that $d_{12}(a_{11}+b_{12}+f_{22})-(a_{11}+b_{12}+f_{22})d_{12}^{*}=d_{12}b_{12}-b_{12}d_{12}^{*}.$ It follows that $d_{12}f_{22}-f_{22}d_{12}^{*}=0$ which results that $d_{12}f_{22}=0$ and $f_{22}d_{12}^{*}=0.$ As consequence, we obtain $f_{22}=0.$ Now, choose $f=f_{11}+f_{12}+f_{21}+f_{22}\in \mathcal{A}$ such that $\Phi (f)=\Phi (a_{11}) + \Phi (c_{21})$. By Lemma \ref{lem21}(ii), we have
{\allowdisplaybreaks\begin{eqnarray*}\allowdisplaybreaks
&\Phi (p_{2}f-fp_{2}^{*})=\Phi (p_{2}a_{11}-a_{11}p_{2}^{*})+\Phi (p_{2}c_{21}-c_{21}p_{2}^{*})=\Phi (c_{21}).
\end{eqnarray*}}
This implies that $p_{2}f-fp_{2}^{*}=c_{21}$ which results that $f_{12}=0$ and $f_{21}=c_{21},$ by directness of the Peirce decomposition. This show that, $\Phi (f_{11}+c_{21}+f_{22})=\Phi (a_{11})+\Phi (c_{21}).$ It follows that, for any element $d_{21}\in  \mathcal{A}_{21}$ we have
{\allowdisplaybreaks\begin{multline*}\allowdisplaybreaks
\Phi (d_{21}(f_{11}+c_{21}+f_{22})-(f_{11}+c_{21}+f_{22})d_{21}^{*})=\Phi (d_{21}a_{11}-a_{11}d_{21}^{*})\\+\Phi (d_{21}c_{21}-c_{21}d_{21}^{*})
\end{multline*}}
which leads to
{\allowdisplaybreaks\begin{multline*}\allowdisplaybreaks
\Phi (d_{21}f_{11}+d_{21}c_{21}-f_{11}d_{21}^{*}-c_{21}d_{21}^{*})=\Phi (d_{21}a_{11}-a_{11}d_{21}^{*})\\+\Phi (d_{21}c_{21}-c_{21}d_{21}^{*}).
\end{multline*}}
Hence,
{\allowdisplaybreaks\begin{multline*}\allowdisplaybreaks
\Phi ((d_{21}f_{11}+d_{21}c_{21}-f_{11}d_{21}^{*}-c_{21}d_{21}^{*})p_{1}-p_{1}(d_{21}f_{11}+d_{21}c_{21}-f_{11}d_{21}^{*}-c_{21}d_{21}^{*})^{*})\\=\Phi ((d_{21}a_{11}-a_{11}d_{21}^{*})p_{1}-p_{1}(d_{21}a_{11}-a_{11}d_{21}^{*})^{*})\\+\Phi ((d_{21}c_{21}-c_{21}d_{21}^{*})p_{1}-p_{1}(d_{21}c_{21}-c_{21}d_{21}^{*})^{*})
\end{multline*}}
from which we get
{\allowdisplaybreaks\begin{eqnarray*}\allowdisplaybreaks
\Phi (d_{21}f_{11}-f_{11}^{*}d_{21}^{*})=\Phi (d_{21}a_{11}-a_{11}^{*}d_{21}^{*}).
\end{eqnarray*}}
This show that
$d_{21}f_{11}-f_{11}^{*}d_{21}^{*}=d_{21}a_{11}-a_{11}^{*}d_{21}^{*}$ which results that $d_{21}f_{11}=d_{21}a_{11}$ and $f_{11}^{*}d_{21}^{*}=a_{11}^{*}d_{21}^{*}.$ Thus $f_{11}=a_{11}.$ We have thus shown that $\Phi (a_{11}+c_{21}+f_{22})=\Phi (a_{11})+\Phi (b_{12}).$ Thereby, for any element $d_{12}\in  \mathcal{A}_{12}$ we have
{\allowdisplaybreaks\begin{multline*}\allowdisplaybreaks
\Phi (d_{12}(a_{11}+c_{21}+f_{22})-(a_{11}+c_{21}+f_{22})d_{12}^{*})=\Phi (d_{12}a_{11}-a_{11}d_{12}^{*})\\
+\Phi (d_{12}c_{21}-c_{21}d_{12}^{*})=\Phi (d_{12}c_{21}-c_{21}d_{12}^{*})
\end{multline*}}
which yields that $d_{12}(a_{11}+c_{21}+f_{22})-(a_{11}+c_{21}+f_{22})d_{12}^{*}=d_{12}c_{21}-c_{21}d_{12}^{*}.$ It follows that $d_{12}f_{22}-f_{22}d_{12}^{*}=0$ which implies that $d_{12}f_{22}=0$ and $f_{22}d_{12}^{*}=0.$ As consequence, we obtain $f_{22}=0.$

Similarly, we prove the cases (iii) and (iv).
\end{proof}

\begin{claim}\label{cliam22} For arbitrary elements $b_{12}\in \mathcal{A}_{12}$ and $c_{21}\in \mathcal{A}_{21}$ holds $\Phi (b_{12}+c_{21})=\Phi (b_{12})+ \Phi (c_{21}).$
\end{claim}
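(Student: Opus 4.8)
The plan is to exploit bijectivity exactly as in Claim~\ref{claim21}: by surjectivity pick an element $f=f_{11}+f_{12}+f_{21}+f_{22}$ with $\Phi(f)=\Phi(b_{12})+\Phi(c_{21})$, and then pin down every Peirce component of $f$ by feeding $f$ into Lemma~\ref{lem21} against the projections $p_1,p_2$ and against arbitrary off-diagonal elements, using injectivity and $\Phi(0)=0$ (Lemma~\ref{lem22}) at each stage. The goal is to prove $f=b_{12}+c_{21}$, which is equivalent to the claim.

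For the product $ab+ba^{*}$ this is short. Applying Lemma~\ref{lem21}(ii) with $t=p_1$ and $p_1^{*}=p_1$, the left side becomes $\Phi(p_1f+fp_1)=\Phi(2f_{11}+f_{12}+f_{21})$, while the right side collapses to $\Phi(p_1b_{12}+b_{12}p_1)+\Phi(p_1c_{21}+c_{21}p_1)=\Phi(b_{12})+\Phi(c_{21})=\Phi(f)$. Injectivity forces $2f_{11}+f_{12}+f_{21}=f$, so $f_{11}=f_{22}=0$. With $f=f_{12}+f_{21}$ in hand, Lemma~\ref{lem21}(i) with $t=p_2$ gives $\Phi(f_{12}+f_{12}^{*})=\Phi(b_{12}+b_{12}^{*})$, hence $f_{12}=b_{12}$, and with $t=p_1$ gives $\Phi(f_{21}+f_{21}^{*})=\Phi(c_{21}+c_{21}^{*})$, hence $f_{21}=c_{21}$. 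This settles the first case.

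For the product $ab-ba^{*}$ the first reduction is analogous but weaker. Lemma~\ref{lem21}(i) with $t=p_1$ yields $\Phi(f_{11}+f_{21}-f_{11}^{*}-f_{21}^{*})=\Phi(c_{21}-c_{21}^{*})$ (the $b_{12}$-term dies because $b_{12}p_1-p_1b_{12}^{*}=0$), so comparing Peirce components gives $f_{21}=c_{21}$ and $f_{11}=f_{11}^{*}$; symmetrically $t=p_2$ gives $f_{12}=b_{12}$ and $f_{22}=f_{22}^{*}$. The main obstacle, absent in the plus case, now appears: since each $p_i$ is self-adjoint, the products appearing here are \emph{commutators} with $p_i$, and these annihilate the self-adjoint diagonal parts $f_{11},f_{22}$, so no projection can detect them. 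In the plus case the corresponding products were anticommutators, which is exactly why $f_{11},f_{22}$ died immediately.

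To remove the diagonal parts I would probe with genuine off-diagonal elements and then collapse the resulting two-term identity by a second application of Lemma~\ref{lem21}. For $f_{22}$: Lemma~\ref{lem21}(ii) with $t=d_{12}\in\mathcal{A}_{12}$ gives $\Phi(P+Q+R)=\Phi(P)+\Phi(Q)$, where $P=d_{12}b_{12}-b_{12}d_{12}^{*}$, $Q=d_{12}c_{21}-c_{21}d_{12}^{*}$ and $R=d_{12}f_{22}-f_{22}d_{12}^{*}$. Applying Lemma~\ref{lem21}(i) with $s=p_2$ to this identity annihilates the $P$-term (one checks $Pp_2-p_2P^{*}=0$ from the Peirce relations), and injectivity then leaves $d_{12}f_{22}-(d_{12}f_{22})^{*}=0$; since a self-adjoint element of $\mathcal{A}_{12}$ must vanish, $d_{12}f_{22}=0$ for all $d_{12}\in\mathcal{A}_{12}$, whence $f_{22}=0$ by the non-degeneracy of the Peirce decomposition quoted from \cite{FerGuz1}. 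Probing symmetrically with $d_{21}\in\mathcal{A}_{21}$ and collapsing with $s=p_1$ (which this time kills the $c_{21}$-term) gives $d_{21}f_{11}=0$ for all $d_{21}$, hence $f_{11}=0$. Thus $f=b_{12}+c_{21}$ and $\Phi(b_{12}+c_{21})=\Phi(b_{12})+\Phi(c_{21})$. The key technical point throughout is this nested use of Lemma~\ref{lem21} in the minus case: a single application only returns an unhelpful sum of two $\Phi$-values, and one must choose the second test projection precisely so that one summand becomes $\Phi(0)=0$.
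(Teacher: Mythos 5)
Your proof is correct and follows essentially the same route as the paper: choose a preimage $f$ of $\Phi (b_{12})+\Phi (c_{21})$, use projection probes and nested applications of Lemma \ref{lem21} against off-diagonal elements, and eliminate the Peirce components of $f$ one by one via injectivity and the non-degeneracy of the decomposition. The only deviation is that in the $ab+ba^{*}$ case you identify $f_{12}$ and $f_{21}$ by hitting $f$ directly with $p_{2}$ and $p_{1}$ rather than probing with arbitrary $d_{21},d_{12}$ as the paper does there; this is a harmless (indeed slightly cleaner) shortcut, and it is the same device the paper itself uses at the corresponding step of the $ab-ba^{*}$ case.
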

\begin{proof} First case: the product $ab+ba^{*}.$ Choose an element $f=f_{11}+f_{12}+f_{21}+f_{22}\in \mathcal{A}$ such that $\Phi (f)=\Phi (b_{12})+\Phi (c_{21}).$ Then 
{\allowdisplaybreaks\begin{multline*}\allowdisplaybreaks
\Phi (p_{1}f+fp_{1}^{*})=\Phi (p_{1}b_{12}+b_{12}p_{1}^{*})+\Phi (p_{1}c_{21}+c_{21}p_{1}^{*})\\
=\Phi (b_{12})+\Phi (c_{21})=\Phi (f).
\end{multline*}}
This implies that $p_{1}f+fp_{1}^{*}=f$ which results in $f_{11}=0$ and $f_{22}=0.$ It follows that $\Phi (f_{12}+f_{21})=\Phi (b_{12})+\Phi (c_{21}).$ Hence, for an arbitrary element $d_{21}\in \mathcal{A}_{21}$ we have 
{\allowdisplaybreaks\begin{eqnarray*}\allowdisplaybreaks
&\Phi (d_{21}(f_{12}+f_{21})+(f_{12}+f_{21})d_{21}^{*})=\Phi (d_{21}b_{12}+b_{12}d_{21}^{*})+\Phi (d_{21}c_{21}+c_{21}d_{21}^{*})
\end{eqnarray*}}
which leads to
{\allowdisplaybreaks\begin{multline*}\allowdisplaybreaks
\Phi ((d_{21}(f_{12}+f_{21})+(f_{12}+f_{21})d_{21}^{*})p_{1}+p_{1}(d_{21}(f_{12}+f_{21})+(f_{12}+f_{21})d_{21}^{*})^{*})\\=\Phi ((d_{21}b_{12}+b_{12}d_{21}^{*})p_{1}+p_{1}(d_{21}b_{12}+b_{12}d_{21}^{*})^{*})\\+\Phi ((d_{21}c_{21}+c_{21}d_{21}^{*})p_{1}+p_{1}(d_{21}c_{21}+d_{21}^{*})^{*})
\end{multline*}}
yielding
{\allowdisplaybreaks\begin{eqnarray*}\allowdisplaybreaks
\Phi (f_{12}d_{21}^{*}+d_{21}f_{12}^{*})=\Phi (b_{12}d_{21}^{*}+d_{21}b_{12}^{*}).
\end{eqnarray*}}
This implies that $f_{12}d_{21}^{*}+d_{21}f_{12}^{*}=b_{12}d_{21}^{*}+d_{21}b_{12}^{*}$ which results in $f_{12}=b_{12}.$ Next, for an arbitrary element $d_{12}\in  \mathcal{A}_{12}$ we have
{\allowdisplaybreaks\begin{eqnarray*}\allowdisplaybreaks
&\Phi (d_{12}(f_{12}+f_{21})+(f_{12}+f_{21})d_{12}^{*})=\Phi (d_{12}b_{12}+b_{12}d_{12}^{*})+\Phi (d_{12}c_{21}+c_{21}d_{12}^{*})
\end{eqnarray*}}
which leads to
{\allowdisplaybreaks\begin{multline*}\allowdisplaybreaks
\Phi ((d_{12}(f_{12}+f_{21})+(f_{12}+f_{21})d_{12}^{*})p_{2}+p_{2}(d_{12}(f_{12}+f_{21})+(f_{12}+f_{21})d_{12}^{*})^{*})\\=\Phi ((d_{12}b_{12}+b_{12}d_{12}^{*})p_{2}+p_{2}(d_{12}b_{12}+b_{12}d_{12}^{*})^{*})\\+\Phi ((d_{12}c_{21}+c_{21}d_{12}^{*})p_{2}+p_{2}(d_{12}c_{21}+c_{21}d_{12}^{*})^{*})
\end{multline*}}
resulting in 
{\allowdisplaybreaks\begin{eqnarray*}\allowdisplaybreaks
\Phi (f_{21}d_{12}^{*}+d_{12}f_{21}^{*})=\Phi (c_{21}d_{12}^{*}+d_{12}c_{21}^{*}).
\end{eqnarray*}}
It follows that $f_{21}d_{12}^{*}+d_{12}f_{21}^{*}=c_{21}d_{12}^{*}+d_{12}c_{21}^{*}$ which show that $f_{21}=c_{21}.$

\noindent Second case: the product $ab-ba^{*}.$ Choose $f=f_{11}+f_{12}+f_{21}+f_{22}\in \mathcal{A}$ such that $\Phi (f)=\Phi (b_{12})+\Phi (c_{21}).$ Then 
{\allowdisplaybreaks\begin{eqnarray*}\allowdisplaybreaks
\Phi (fp_{2}-p_{2}f^{*})=\Phi (b_{12}p_{2}-p_{2}b_{12}^{*})+\Phi (c_{21}p_{2}-p_{2}c_{21}^{*})=\Phi (b_{12}-b_{12}^{*})
\end{eqnarray*}}
which implies that $fp_{2}-p_{2}f^{*}=b_{12}-b_{12}^{*}$ from which we get $f_{12}=b_{12}.$ Similarly we prove that $f_{21}=c_{21}.$
Next, for any element $d_{21}\in \mathcal{A}_{21}$ we have 
{\allowdisplaybreaks\begin{multline*}\allowdisplaybreaks
\Phi (d_{21}(f_{11}+b_{12}+c_{21}+f_{22})-(f_{11}+b_{12}+c_{21}+f_{22})d_{21}^{*})\\
=\Phi (d_{21}b_{12}-b_{12}d_{21}^{*})+\Phi (d_{21}c_{21}-c_{21}d_{21}^{*})
\end{multline*}}
from which we have
{\allowdisplaybreaks\begin{multline*}\allowdisplaybreaks
\Phi ((d_{21}f_{11}+d_{21}b_{12}+d_{21}c_{21}-f_{11}d_{21}^{*}-b_{12}d_{21}^{*}-c_{21}d_{21}^{*})p_{1}\\-p_{1}(d_{21}f_{11}+d_{21}b_{12}+d_{21}c_{21}-f_{11}d_{21}^{*}-b_{12}d_{21}^{*}-c_{21}d_{21}^{*})^{*})\\=\Phi ((d_{21}b_{12}-b_{12}d_{21}^{*})p_{1}-p_{1}(d_{21}b_{12}-b_{12}d_{21}^{*})^{*})\\+\Phi ((d_{21}c_{21}-c_{21}d_{21}^{*})p_{1}-p_{1}(d_{21}c_{21}-c_{21}d_{21}^{*})^{*})
\end{multline*}}
which leads to
{\allowdisplaybreaks\begin{eqnarray*}\allowdisplaybreaks
\Phi (d_{21}f_{11}-b_{12}d_{21}^{*}-f_{11}^{*}d_{21}^{*}+d_{21}b_{12}^{*})=\Phi (-b_{12}d_{21}^{*}+d_{21}b_{12}^{*}).
\end{eqnarray*}}
This results that $d_{21}f_{11}-b_{12}d_{21}^{*}-f_{11}^{*}d_{21}^{*}+d_{21}b_{12}^{*}=-b_{12}d_{21}^{*}+d_{21}b_{12}^{*}$ which yields that $d_{21}f_{11}=0$ and $f_{11}^{*}d_{21}^{*}=0.$ As consequence, we obtain $f_{11}=0.$ Next, for any element $d_{12}\in  \mathcal{A}_{12}$ we have
{\allowdisplaybreaks\begin{multline*}\allowdisplaybreaks
\Phi (d_{12}(b_{12}+c_{21}+f_{22})-(b_{12}+c_{21}+f_{22})d_{12}^{*})=\Phi (d_{12}b_{12}-b_{12}d_{12}^{*})\\+\Phi (d_{12}c_{21}-c_{21}d_{12}^{*})
\end{multline*}}
which leads to
{\allowdisplaybreaks\begin{multline*}\allowdisplaybreaks
\Phi ((d_{12}b_{12}+d_{12}c_{21}+d_{12}f_{22}-b_{12}d_{12}^{*}-c_{21}d_{12}^{*}-f_{22}d_{12}^{*})p_{2}\\
-p_{2}(d_{12}b_{12}+d_{12}c_{21}+d_{12}f_{22}-b_{12}d_{12}^{*}-c_{21}d_{12}^{*}-f_{22}d_{12}^{*})^{*})\\=\Phi ((d_{12}b_{12}-b_{12}d_{12}^{*})p_{2}-p_{2}(d_{12}b_{12}-b_{12}d_{12}^{*})^{*})\\+\Phi ((d_{12}c_{21}-c_{21}d_{12}^{*})p_{2}-p_{2}(d_{12}c_{21}-c_{21}d_{12}^{*})^{*})
\end{multline*}}
from which follows that
{\allowdisplaybreaks\begin{eqnarray*}\allowdisplaybreaks
\Phi (d_{12}f_{22}-c_{21}d_{12}^{*}-f_{22}^{*}d_{12}^{*}+d_{12}c_{21}^{*})=\Phi (-c_{21}d_{12}^{*}+d_{12}c_{21}^{*}).
\end{eqnarray*}}
This shows that $d_{12}f_{22}-c_{21}d_{12}^{*}-f_{22}^{*}d_{12}^{*}+d_{12}c_{21}^{*}=-c_{21}d_{12}^{*}+d_{12}c_{21}^{*}$ which implies that $d_{12}f_{22}=0$ and $f_{22}^{*}d_{12}^{*}=0.$ As consequence, we obtain $f_{22}=0.$
\end{proof}

\begin{claim}\label{claim23} For arbitrary elements $a_{11}\in \mathcal{A}_{11},$ $b_{12}\in \mathcal{A}_{12},$ $c_{21}\in \mathcal{A}_{21}$ and $d_{22}\in \mathcal{A}_{22}$ hold: (i) $\Phi (a_{11}+b_{12}+c_{21})=\Phi (a_{11})+\Phi (b_{12})+\Phi (c_{21})$ and (ii) $\Phi (b_{12}+c_{21}+d_{22})=\Phi (b_{12})+\Phi (c_{21})+\Phi (d_{22}).$
\end{claim}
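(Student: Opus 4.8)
My plan is to prove part (i) for the product $ab+ba^{*}$ in detail and to obtain part (ii) and the $ab-ba^{*}$ case by the obvious symmetries. First I observe that the computation in Lemma \ref{lem21} extends verbatim to a decomposition $\Phi(f)=\Phi(a)+\Phi(b)+\Phi(c)$ into three summands, since it only uses distributivity of the product over the sum $\Phi(f)=\Phi(a)+\Phi(b)+\Phi(c)$; I will use this three-term form freely. So, invoking surjectivity, I would start by choosing $f=f_{11}+f_{12}+f_{21}+f_{22}\in\mathcal{A}$ with $\Phi(f)=\Phi(a_{11})+\Phi(b_{12})+\Phi(c_{21})$.

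The first step is to pin down the off-diagonal components and to kill $f_{22}$. Applying Lemma \ref{lem21} with the self-adjoint multiplier $t=p_{2}$ gives, on the left, $p_{2}f+fp_{2}^{*}=f_{12}+f_{21}+2f_{22}$, while on the right the three contributions reduce (using $p_{2}a_{11}+a_{11}p_{2}^{*}=0$, $p_{2}b_{12}+b_{12}p_{2}^{*}=b_{12}$, $p_{2}c_{21}+c_{21}p_{2}^{*}=c_{21}$) and collapse, via Lemma \ref{lem22} and Claim \ref{cliam22}, to $\Phi(b_{12}+c_{21})$. Injectivity of $\Phi$ together with directness of the Peirce decomposition then forces $f_{12}=b_{12}$, $f_{21}=c_{21}$ and $f_{22}=0$, so that $\Phi(f_{11}+b_{12}+c_{21})=\Phi(a_{11})+\Phi(b_{12})+\Phi(c_{21})$.

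The main step, and the main obstacle, is to show $f_{11}=a_{11}$. I would multiply the last identity by an arbitrary $d_{21}\in\mathcal{A}_{21}$ through Lemma \ref{lem21}, and then apply the two Peirce projections $X\mapsto Xp_{2}+p_{2}X^{*}$ and $X\mapsto Xp_{1}+p_{1}X^{*}$ (again via Lemma \ref{lem21}), whose composition extracts the $\mathcal{A}_{12}$-component and symmetrizes it. This produces
\[
\Phi\bigl(f_{11}d_{21}^{*}+d_{21}c_{21}+d_{21}f_{11}^{*}+c_{21}^{*}d_{21}^{*}\bigr)=\Phi\bigl(a_{11}d_{21}^{*}+d_{21}a_{11}^{*}\bigr)+\Phi\bigl(d_{21}c_{21}+c_{21}^{*}d_{21}^{*}\bigr).
\]
The difficulty is that the spurious cross-terms $d_{21}c_{21}$ and $c_{21}^{*}d_{21}^{*}$ coming from $c_{21}$ do not cancel automatically (the term $b_{12}$ drops out after projecting onto $\mathcal{A}_{12}$, but $c_{21}$ survives). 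The device to eliminate them is to run the \emph{same} multiplication-and-projection computation on the already-established identity $\Phi(a_{11}+c_{21})=\Phi(a_{11})+\Phi(c_{21})$ of Claim \ref{claim21}(ii); this yields an equation whose right-hand side is \emph{identical} to the one above. Comparing the two identities and using injectivity then cancels the entire $c_{21}$ contribution and leaves $f_{11}d_{21}^{*}+d_{21}f_{11}^{*}=a_{11}d_{21}^{*}+d_{21}a_{11}^{*}$; directness separates the two Peirce components, giving $f_{11}d_{21}^{*}=a_{11}d_{21}^{*}$ for every $d_{21}$, whence the nondegeneracy property recalled before Claim \ref{claim21} forces $f_{11}=a_{11}$. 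This completes (i) for the product $ab+ba^{*}$.

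Finally, part (ii) is obtained by the identical argument after interchanging $p_{1}\leftrightarrow p_{2}$, hence $\mathcal{A}_{11}\leftrightarrow\mathcal{A}_{22}$ and $\mathcal{A}_{12}\leftrightarrow\mathcal{A}_{21}$, with $d_{22}$ now playing the role formerly played by $a_{11}$; the auxiliary identity is then the suitable case of Claim \ref{claim21}. The $ab-ba^{*}$ case runs along the same lines, replacing every symmetric combination $xt+tx^{*}$ by the antisymmetric $xt-tx^{*}$ and invoking the second-case parts of Lemmas \ref{lem21}, \ref{lem22} and Claims \ref{claim21}, \ref{cliam22}, exactly as in the proofs of those claims. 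I expect the crux in every variant to be the same cancellation trick of the third paragraph: manufacturing, from a two-term additivity already proved, a companion identity with the same right-hand side so that the unwanted third component disappears under injectivity.
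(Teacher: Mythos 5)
Your argument is correct where you carry it out in detail, but your crux step takes a genuinely different route from the paper's. Both proofs start identically: choose $f$ with $\Phi (f)=\Phi (a_{11})+\Phi (b_{12})+\Phi (c_{21})$, hit it with $t=p_{2}$, and use Claim \ref{cliam22} to force $f_{12}=b_{12},$ $f_{21}=c_{21},$ $f_{22}=0$. For $f_{11}=a_{11}$, however, the paper first \emph{regroups} the right-hand side as $\Phi (a_{11}+b_{12})+\Phi (c_{21})$ using Claim \ref{claim21}(i); after multiplying by $d_{21}$ and applying the single projection $X\mapsto Xp_{1}+p_{1}X^{*}$, the $c_{21}$-summand projects to $\Phi (0)=0$ and the right side collapses to one $\Phi$-value, so injectivity finishes immediately. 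You instead keep the three-term decomposition, choose the composed projection that extracts and symmetrizes the $\mathcal{A}_{12}$-component (which kills the $b_{12}$-summand but leaves the $c_{21}$ cross-terms $d_{21}c_{21}+c_{21}^{*}d_{21}^{*}$ alive), and then cancel those by running the identical computation on the already-proved identity $\Phi (a_{11}+c_{21})=\Phi (a_{11})+\Phi (c_{21})$ to manufacture a companion equation with the same right-hand side. I checked that your composed projection does act as you claim on each Peirce component and that the comparison does yield $f_{11}d_{21}^{*}+d_{21}f_{11}^{*}=a_{11}d_{21}^{*}+d_{21}a_{11}^{*}$, hence $f_{11}=a_{11}$ by directness and the nondegeneracy property; so your route is valid, just one step longer. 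The paper's regrouping trick is the cleaner device and is worth internalizing, since it is reused in Claim \ref{claim24}. One caution on the part you only sketch: in the $ab-ba^{*}$ case the antisymmetric projection $p_{2}f-fp_{2}^{*}=f_{21}-f_{12}$ gives no information about $f_{22}$, so $f_{22}=0$ is \emph{not} obtained in the first step there; as in the second cases of Claims \ref{claim21} and \ref{cliam22} (and of this claim in the paper), you must carry $f_{22}$ along and kill it at the end with a separate multiplication by an arbitrary $d_{12}\in \mathcal{A}_{12}$.
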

\begin{proof} First case: the product $ab+ba^{*}.$ Choose an element $f=f_{11}+f_{12}+f_{21}+f_{22}\in \mathcal{A}$ such that $\Phi (f)=\Phi (a_{11}) + \Phi (b_{12})+\Phi (c_{21})$. By Claim \ref{lem22} we have
{\allowdisplaybreaks\begin{multline*}\allowdisplaybreaks
\Phi (p_{2}f+fp_{2}^{*})=\Phi (p_{2}a_{11}+a_{11}p_{2}^{*})+\Phi (p_{2}b_{12}+b_{12}p_{2}^{*})+\Phi (p_{2}c_{21}+c_{21}p_{2}^{*})\\
=\Phi (b_{12})+\Phi (c_{21})=\Phi (b_{12}+c_{21}).
\end{multline*}}
This implies that $p_{2}f+fp_{2}^{*}=b_{12}+c_{21}$ resulting in $f_{12}=b_{12},$ $f_{21}=c_{21}$ and $f_{22}=0.$ It follows that
\begin{eqnarray*}
\Phi (f_{11}+b_{12}+c_{21})=\Phi (a_{11})+\Phi (b_{12})+\Phi (c_{21})=\Phi (a_{11}+b_{12})+\Phi (c_{21}),
\end{eqnarray*}
by Claim \ref{claim21}(i). Hence, for an arbitrary element $d_{21}\in \mathcal{A}_{21}$ we have 
\begin{multline*}
\Phi (d_{21}(f_{11}+b_{12}+c_{21})+(f_{11}+b_{12}+c_{21})d_{21}^{*})\\
=\Phi (d_{21}(a_{11}+b_{12})+(a_{11}+b_{12})d_{21}^{*})+\Phi (d_{21}c_{21}+c_{21}d_{21}^{*})
\end{multline*}
which leads to
\begin{multline*}
\Phi (d_{21}f_{11}+d_{21}b_{12}+d_{21}c_{21}+f_{11}d_{21}^{*}+b_{12}d_{21}^{*}+c_{21}d_{21}^{*})\\
=\Phi (d_{21}a_{11}+d_{21}b_{12}+a_{11}d_{21}^{*}+b_{12}d_{21}^{*})+\Phi (d_{21}c_{21}+c_{21}d_{21}^{*}).
\end{multline*}
It follows that
\begin{multline*}
\Phi ((d_{21}f_{11}+d_{21}b_{12}+d_{21}c_{21}+f_{11}d_{21}^{*}+b_{12}d_{21}^{*}+c_{21}d_{21}^{*})p_{1}\\
+p_{1}(d_{21}f_{11}+d_{21}b_{12}+d_{21}c_{21}+f_{11}d_{21}^{*}+b_{12}d_{21}^{*}+c_{21}d_{21}^{*})^{*})\\
=\Phi ((d_{21}a_{11}+d_{21}b_{12}+a_{11}d_{21}^{*}+b_{12}d_{21}^{*})p_{1}+p_{1}(d_{21}a_{11}+d_{21}b_{12}+a_{11}d_{21}^{*}+b_{12}d_{21}^{*})^{*})\\
+\Phi ((d_{21}c_{21}+c_{21}d_{21}^{*})p_{1}+p_{1}(d_{21}c_{21}+c_{21}d_{21}^{*})^{*})
\end{multline*}
which implies that
\begin{eqnarray*}
&\Phi (d_{21}f_{11}+b_{12}d_{21}^{*}+f_{11}^{*}d_{21}^{*}+d_{21}b_{12}^{*})=\Phi (d_{21}a_{11}+a_{11}^{*}d_{21}^{*}+b_{12}d_{21}^{*}+d_{21}b_{12}^{*}).
\end{eqnarray*}
This results in $d_{21}f_{11}+b_{12}d_{21}^{*}+f_{11}^{*}d_{21}^{*}+d_{21}b_{12}^{*}=d_{21}a_{11}+a_{11}^{*}d_{21}^{*}+b_{12}d_{21}^{*}+d_{21}b_{12}^{*}$ which yields that $f_{11}=a_{11}.$

Similarly, we prove the case (ii).

\noindent Second case: the product $ab-ba^{*}.$ Choose $f=f_{11}+f_{12}+f_{21}+f_{22}\in \mathcal{A}$ such that $\Phi (f)=\Phi (a_{11}) + \Phi (b_{12})+\Phi (c_{21})$. By Claim \ref{lem22} we have
{\allowdisplaybreaks\begin{multline*}\allowdisplaybreaks
\Phi (p_{2}f-fp_{2}^{*})=\Phi (p_{2}a_{11}-a_{11}p_{2}^{*})+\Phi (p_{2}b_{12}-b_{12}p_{2}^{*})+\Phi (p_{2}c_{21}-c_{21}p_{2}^{*})\\
=\Phi (-b_{12})+\Phi (c_{21})=\Phi (-b_{12}+c_{21}).
\end{multline*}}
This implies that $p_{2}f-fp_{2}^{*}=-b_{12}+c_{21}$ from which we obtain $f_{12}=b_{12}$ and $f_{21}=c_{21}.$ We have thus shown that 
\begin{eqnarray*}
\Phi (f_{11}+b_{12}+c_{21}+f_{22})=\Phi (a_{11})+\Phi (b_{12})+\Phi (c_{21})=\Phi (a_{11}+b_{12})+\Phi (c_{21}),
\end{eqnarray*}
by Claim \ref{claim21}(i). Hence, for any element $d_{21}\in \mathcal{A}_{21}$ we have 
\begin{multline*}
\Phi (d_{21}(f_{11}+b_{12}+c_{21}+f_{22})-(f_{11}+b_{12}+c_{21}+f_{22})d_{21}^{*})\\
=\Phi (d_{21}(a_{11}+b_{12})-(a_{11}+b_{12})d_{21}^{*})+\Phi (d_{21}c_{21}-c_{21}d_{21}^{*})
\end{multline*}
from which follows that
\begin{multline*}
\Phi (d_{21}f_{11}+d_{21}b_{12}+d_{21}c_{21}-f_{11}d_{21}^{*}-b_{12}d_{21}^{*}-c_{21}d_{21}^{*})\\
=\Phi (d_{21}a_{11}+d_{21}b_{12}-a_{11}d_{21}^{*}-b_{12}d_{21}^{*})+\Phi (d_{21}c_{21}-c_{21}d_{21}^{*}).
\end{multline*}
As consequence, we obtain
\begin{multline*}
\Phi ((d_{21}f_{11}+d_{21}b_{12}+d_{21}c_{21}-f_{11}d_{21}^{*}-b_{12}d_{21}^{*}-c_{21}d_{21}^{*})p_{1}\\
-p_{1}(d_{21}f_{11}+d_{21}b_{12}+d_{21}c_{21}-f_{11}d_{21}^{*}-b_{12}d_{21}^{*}-c_{21}d_{21}^{*})^{*})\\
=\Phi ((d_{21}a_{11}+d_{21}b_{12}-a_{11}d_{21}^{*}-b_{12}d_{21}^{*})p_{1}-p_{1}(d_{21}a_{11}+d_{21}b_{12}-a_{11}d_{21}^{*}-b_{12}d_{21}^{*})^{*})\\
+\Phi ((d_{21}c_{21}-c_{21}d_{21}^{*})p_{1}-p_{1}(d_{21}c_{21}-c_{21}d_{21}^{*})^{*})
\end{multline*}
which implies that
\begin{eqnarray*}
&\Phi (d_{21}f_{11}-b_{12}d_{21}^{*}-f_{11}^{*}d_{21}^{*}+d_{21}b_{12}^{*})=\Phi (d_{21}a_{11}-b_{12}d_{21}^{*}-a_{11}^{*}d_{21}^{*}+d_{21}b_{12}^{*}).
\end{eqnarray*}
It follows that $d_{21}f_{11}-b_{12}d_{21}^{*}-f_{11}^{*}d_{21}^{*}+d_{21}b_{12}^{*}=d_{21}a_{11}-b_{12}d_{21}^{*}-a_{11}^{*}d_{21}^{*}+d_{21}b_{12}^{*}$ which yields that $d_{21}f_{11}=d_{21}a_{11}$ and $f_{11}^{*}d_{21}^{*}=a_{11}^{*}d_{21}^{*}.$ This result gives us $f_{11}=a_{11}$ which allows us to write $\Phi (a_{11}+b_{12}+c_{21}+f_{22})=\Phi (a_{11})+\Phi (b_{12})+\Phi (c_{21})=\Phi (a_{11}+b_{12})+\Phi (c_{21}),$ by Claim \ref{claim21}(i). Hence, for any element $d_{12}\in \mathcal{A}_{21}$ we have 
\begin{multline*}
\Phi (d_{12}(a_{11}+b_{12}+c_{21}+f_{22})-(a_{11}+b_{12}+c_{21}+f_{22})d_{12}^{*})\\
=\Phi (d_{12}(a_{11}+b_{12})-(a_{11}+b_{12})d_{12}^{*})+\Phi (d_{12}c_{21}-c_{21}d_{12}^{*})
\end{multline*}
which leads to
\begin{multline*}
\Phi (d_{12}b_{12}+d_{12}c_{21}+d_{12}f_{22}-b_{12}d_{12}^{*}-c_{21}d_{12}^{*}-f_{22}d_{12}^{*})\\
=\Phi (d_{12}b_{12}-b_{12}d_{12}^{*})+\Phi (d_{12}c_{21}-c_{21}d_{12}^{*}).
\end{multline*}
It follows that
\begin{multline*}
\Phi ((d_{12}b_{12}+d_{12}c_{21}+d_{12}f_{22}-b_{12}d_{12}^{*}-c_{21}d_{12}^{*}-f_{22}d_{12}^{*})p_{2}\\
-p_{2}(d_{12}b_{12}+d_{12}c_{21}+d_{12}f_{22}-b_{12}d_{12}^{*}-c_{21}d_{12}^{*}-f_{22}d_{12}^{*})^{*})\\
=\Phi ((d_{12}b_{12}-b_{12}d_{12}^{*})p_{2}-p_{2}(d_{12}b_{12}-b_{12}d_{12}^{*})^{*})\\
+\Phi ((d_{12}c_{21}-c_{21}d_{12}^{*})p_{2}-p_{2}(d_{12}c_{21}-c_{21}d_{12}^{*})^{*})
\end{multline*}
from which we get
\begin{eqnarray*}
&\Phi (d_{12}f_{22}-c_{21}d_{12}^{*}
-f_{22}^{*}d_{12}^{*}+d_{12}c_{21}^{*})=\Phi (-c_{21}^{*}d_{12}^{*}+d_{12}c_{21}^{*}).
\end{eqnarray*}
This shows that $d_{12}f_{22}-c_{21}d_{12}^{*}-f_{22}^{*}d_{12}^{*}+d_{12}c_{21}^{*}=-c_{21}^{*}d_{12}^{*}+d_{12}c_{21}^{*}$ which yields that $d_{12}f_{22}=0$ and $f_{22}^{*}d_{12}^{*}=0.$ As consequence, we obtain $f_{22}=0.$

Similarly, we prove the case (ii).
\end{proof}

\begin{claim}\label{claim24} For arbitrary elements $a_{11}\in \mathcal{A}_{11},$ $b_{12}\in \mathcal{A}_{12},$ $c_{21}\in \mathcal{A}_{21}$ and $d_{22}\in \mathcal{A}_{22}$ holds $\Phi (a_{11}+b_{12}+c_{21}+d_{22})=\Phi (a_{11})+\Phi (b_{12})+\Phi (c_{21})+\Phi (d_{22}).$
\end{claim}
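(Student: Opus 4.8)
The plan is to use surjectivity to pick $f=f_{11}+f_{12}+f_{21}+f_{22}\in\mathcal{A}$ with $\Phi(f)=\Phi(a_{11})+\Phi(b_{12})+\Phi(c_{21})+\Phi(d_{22})$, and then to identify $f_{11}=a_{11}$, $f_{12}=b_{12}$, $f_{21}=c_{21}$, $f_{22}=d_{22}$; applying $\Phi$ to $f$ then gives the claim. Each identification is obtained by evaluating $\Phi$ on a suitable product of $f$ with a projection $p_1,p_2$ or with an off-diagonal element, splitting the right-hand side by Lemma~\ref{lem21}, recombining it through Claims~\ref{claim21}, \ref{cliam22}, \ref{claim23}, and finally invoking injectivity of $\Phi$ together with directness of the Peirce decomposition and the nondegeneracy recalled before Claim~\ref{claim21}.

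For the product $ab+ba^{*}$, regrouping $\Phi(f)=\Phi(a_{11})+\Phi(b_{12}+c_{21}+d_{22})$ by Claim~\ref{claim23}(ii) and applying Lemma~\ref{lem21}(ii) with $t=p_2$ gives, after a short computation (using $\Phi(0)=0$ and that the $\mathcal{A}_{22}$ term contributes $p_2 d_{22}+d_{22}p_2^{*}=2d_{22}$), the identity $\Phi(p_2 f+f p_2^{*})=\Phi(b_{12}+c_{21}+2d_{22})$; injectivity and directness then yield $f_{12}=b_{12}$, $f_{21}=c_{21}$, $f_{22}=d_{22}$. For the last corner I would instead take $t=p_1$: now $p_1 a_{11}+a_{11}p_1^{*}=2a_{11}$, the off-diagonal parts contribute $b_{12}+c_{21}$, and the $\mathcal{A}_{22}$ part disappears, so $\Phi(p_1 f+f p_1^{*})=\Phi(2a_{11}+b_{12}+c_{21})$, whence $2f_{11}+b_{12}+c_{21}=2a_{11}+b_{12}+c_{21}$. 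Since $\mathcal{A}$ is a complex algebra this forces $f_{11}=a_{11}$, finishing this case.

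For the product $ab-ba^{*}$ the first step, with $t=p_2$ in the minus version of Lemma~\ref{lem21}(ii), only produces $p_2 f-f p_2^{*}=-b_{12}+c_{21}$, hence $f_{12}=b_{12}$ and $f_{21}=c_{21}$; the two diagonal corners are invisible because $p_i x-x p_i^{*}=0$ whenever $x\in\mathcal{A}_{11}$ or $x\in\mathcal{A}_{22}$. This is the main obstacle. To reach the diagonal I would left-multiply $g=f_{11}+b_{12}+c_{21}+f_{22}$ by an off-diagonal $d_{21}\in\mathcal{A}_{21}$, using the splitting $\Phi(g)=\Phi(a_{11}+b_{12})+\Phi(c_{21}+d_{22})$ (Claim~\ref{claim21}(i),(iv)). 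The point is that the Peirce relations $\mathcal{A}_{21}\mathcal{A}_{22}=\mathcal{A}_{22}\mathcal{A}_{12}=0$ annihilate every term containing $f_{22}$ or $d_{22}$, so after the second projection $X\mapsto X p_1-p_1 X^{*}$ the summand coming from $c_{21}+d_{22}$ drops out entirely and the equation collapses to a single $\Phi$, namely $\Phi(d_{21}f_{11}-b_{12}d_{21}^{*}-f_{11}^{*}d_{21}^{*}+d_{21}b_{12}^{*})=\Phi(d_{21}a_{11}-b_{12}d_{21}^{*}-a_{11}^{*}d_{21}^{*}+d_{21}b_{12}^{*})$. Injectivity, cancellation, and the nondegeneracy $t_{21}x_{11}=0$ for all $t_{21}\in\mathcal{A}_{21}\Rightarrow x_{11}=0$ then give $f_{11}=a_{11}$.

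Finally, with $f_{11}=a_{11}$ in hand I would symmetrically left-multiply $g=a_{11}+b_{12}+c_{21}+f_{22}$ by $d_{12}\in\mathcal{A}_{12}$, keep the same grouping $\Phi(a_{11}+b_{12})+\Phi(c_{21}+d_{22})$, and apply $X\mapsto X p_2-p_2 X^{*}$; this time the summand from $a_{11}+b_{12}$ vanishes and one is left with $\Phi(d_{12}f_{22}-c_{21}d_{12}^{*}-f_{22}^{*}d_{12}^{*}+d_{12}c_{21}^{*})=\Phi(d_{12}d_{22}-c_{21}d_{12}^{*}-d_{22}^{*}d_{12}^{*}+d_{12}c_{21}^{*})$, giving $d_{12}f_{22}=d_{12}d_{22}$ for all $d_{12}$ and hence $f_{22}=d_{22}$ by nondegeneracy. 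The delicate point throughout is to choose the regrouping of the right-hand side and the second projection so that exactly one summand survives, thereby avoiding any appeal to additivity inside a single Peirce space $\mathcal{A}_{ij}$, which is not yet available at this stage.
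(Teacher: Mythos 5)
Your argument is correct. For the product $ab+ba^{*}$ it is essentially the paper's own proof: the authors also read off all four corners from $p_{1}f+fp_{1}^{*}=2a_{11}+b_{12}+c_{21}$ and $p_{2}f+fp_{2}^{*}=b_{12}+c_{21}+2d_{22}$ (you merely take $p_{2}$ first and $p_{1}$ second), with the four-term splitting justified by regrouping via Claim~\ref{claim23}. For the product $ab-ba^{*}$, however, you take a genuinely different route. The paper sidesteps the obstacle you identify --- the invisibility of the diagonal corners under $x\mapsto p_{j}x-xp_{j}^{*}$ --- by multiplying with the \emph{anti-self-adjoint} elements $ip_{1}$ and $ip_{2}$: since $(ip_{j})^{*}=-ip_{j}$, one gets $(ip_{j})f-f(ip_{j})^{*}=i(p_{j}f+fp_{j})$, so both diagonal blocks are recovered in one stroke and the minus case reduces formally to the plus case. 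Your alternative --- a second multiplication by an off-diagonal element $d_{21}$ (resp. $d_{12}$) followed by the projection $X\mapsto Xp_{1}-p_{1}X^{*}$ (resp. $X\mapsto Xp_{2}-p_{2}X^{*}$), with the regrouping $\Phi(a_{11}+b_{12})+\Phi(c_{21}+d_{22})$ chosen so that exactly one summand survives --- is longer but sound; I checked the Peirce bookkeeping ($\mathcal{A}_{21}\mathcal{A}_{22}=\mathcal{A}_{22}\mathcal{A}_{12}=0$, the surviving components, and the final appeal to the nondegeneracy from \cite[Theorem 2.2]{FerGuz1}) and it closes. It has the virtue of being uniform with the technique already used in the second cases of Claims~\ref{claim21}--\ref{claim23}, whereas the paper's $ip_{j}$ trick is shorter and exploits the specific algebraic form of the $\ast$-product; either is acceptable here.
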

\begin{proof} First case: the product $ab+ba^{*}.$ Choose $f=f_{11}+f_{12}+f_{21}+f_{22}\in \mathcal{A}$ such that $\Phi (f)=\Phi (a_{11}) + \Phi (b_{12})+\Phi (c_{21})+\Phi (d_{22})$. By Claim \ref{lem23}(i) we get
{\allowdisplaybreaks\begin{multline*}\allowdisplaybreaks
\Phi (p_{1}f+fp_{1}^{*})=\Phi (p_{1}a_{11}+a_{11}p_{1}^{*})+\Phi (p_{1}b_{12}+b_{12}p_{1}^{*})+\Phi (p_{1}c_{21}+c_{21}p_{1}^{*})\\
+\Phi (p_{1}d_{22}+d_{22}p_{1}^{*})
=\Phi (2a_{11})+\Phi (b_{12})+\Phi (c_{21})=\Phi (2a_{11}+b_{12}+c_{21})
\end{multline*}}
which leads to $p_{1}f+fp_{1}^{*}=2a_{11}+b_{12}+c_{21}.$ As consequence we obtain $f_{11}=a_{11},$ $f_{12}=b_{12}$ and $f_{21}=c_{21}.$ Next, by Claim \ref{lem23}(ii) we have
{\allowdisplaybreaks\begin{multline*}\allowdisplaybreaks
\Phi (p_{2}f+fp_{2}^{*})=\Phi (p_{2}a_{11}+a_{11}p_{2}^{*})+\Phi (p_{2}b_{12}+b_{12}p_{2}^{*})+\Phi (p_{2}c_{21}+c_{21}p_{2}^{*})\\+\Phi (p_{2}d_{22}+d_{22}p_{2}^{*})
=\Phi (b_{12})+\Phi (c_{21})+\Phi (2d_{22})=\Phi (b_{12}+c_{21}+2d_{22}).
\end{multline*}}
This implies that $p_{2}f+fp_{2}^{*}=b_{12}+c_{21}+2d_{22}$ which results in $f_{22}=d_{22}.$

\noindent Second case: the product $ab-ba^{*}.$ Choose $f=f_{11}+f_{12}+f_{21}+f_{22}\in \mathcal{A}$ such that $\Phi (f)=\Phi (a_{11}) + \Phi (b_{12})+\Phi (c_{21})+\Phi (d_{22})$. By Claim \ref{lem23}(i) we get
{\allowdisplaybreaks\begin{multline*}\allowdisplaybreaks
\Phi ((ip_{1})f-f(ip_{1})^{*})=\Phi ((ip_{1})a_{11}-a_{11}(ip_{1})^{*})+\Phi ((ip_{1})b_{12}-b_{12}(ip_{1})^{*})\\+\Phi ((ip_{1})c_{21}-c_{21}(ip_{1})^{*})+\Phi ((ip_{1})d_{22}-d_{22}(ip_{1})^{*})\\
=\Phi (2ia_{11})+\Phi (ib_{12})+\Phi (ic_{21})=\Phi (2ia_{11}+ib_{12}+ic_{21})
\end{multline*}}
which leads to $(ip_{1})f-f(ip_{1})^{*}=2ia_{11}+ib_{12}+ic_{21}.$ As consequence we obtain $f_{11}=a_{11},$ $f_{12}=b_{12}$ and $f_{21}=c_{21}.$ Next, by Claim \ref{lem23}(ii) we have
{\allowdisplaybreaks\begin{multline*}\allowdisplaybreaks
\Phi ((ip_{2})f-f(ip_{2})^{*})=\Phi ((ip_{2})a_{11}-a_{11}(ip_{2})^{*})+\Phi ((ip_{2})b_{12}-b_{12}(ip_{2})^{*})\\
+\Phi ((ip_{2})c_{21}-c_{21}(ip_{2})^{*})+\Phi ((ip_{2})d_{22}-d_{22}(ip_{2})^{*})\\
=\Phi (ib_{12})+\Phi (ic_{21})+\Phi (2id_{22})=\Phi (ib_{12}+ic_{21}+2id_{22})
\end{multline*}}
from which we have that $p_{2}f-fp_{2}^{*}=ib_{12}+ic_{21}+2id_{22}.$ Consequently, we conclude that $f_{22}=d_{22}.$
\end{proof}

\begin{claim}\label{claim25} For arbitrary elements $a_{12},b_{12}\in \mathcal{A}_{12}$ and $c_{21},d_{21}\in \mathcal{A}_{21}$ hold: (i) $\Phi (a_{12}b_{12}+a_{12}^{*})=\Phi (a_{12}b_{12})+ \Phi (a_{12}^{*})$ and (ii) $\Phi (c_{21}d_{21}+c_{21}^{*})=\Phi (c_{21}d_{21})+ \Phi (c_{21}^{*})$ (resp., (i) $\Phi (a_{12}b_{12}-a_{12}^{*})=\Phi (a_{12}b_{12})+ \Phi (-a_{12}^{*})$ and (ii) $\Phi (c_{21}d_{21}-c_{21}^{*})=\Phi (c_{21}d_{21})+ \Phi (-c_{21}^{*})$).
\end{claim}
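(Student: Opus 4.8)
The plan is to realise the combination $a_{12}b_{12}+a_{12}^{*}$ (resp.\ $a_{12}b_{12}-a_{12}^{*}$) as a single Peirce slot of one preserved product, and then to strip away the unwanted slots using the additivity already secured in Claims~\ref{claim21}--\ref{claim23}. The auxiliary product I would feed into $\Phi$ is the one built from $a_{12}$ and the perturbed factor $b_{12}+p_{2}$, namely $a_{12}(b_{12}+p_{2})+(b_{12}+p_{2})a_{12}^{*}$ in the first case. Using the Peirce relations together with $a_{12}p_{2}=a_{12}$ and $p_{2}a_{12}^{*}=a_{12}^{*}$, this element equals $b_{12}a_{12}^{*}+a_{12}+(a_{12}b_{12}+a_{12}^{*})$, whose three summands lie respectively in $\mathcal{A}_{11}$, $\mathcal{A}_{12}$ and $\mathcal{A}_{21}$; the whole point of inserting $p_{2}$ is to force the two junk terms into Peirce spaces distinct from each other and from the target slot $\mathcal{A}_{21}$.

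I would then evaluate $\Phi$ of this product in two ways. On one hand, the product-preserving identity together with Claim~\ref{claim21}(iii) (to write $\Phi(b_{12}+p_{2})=\Phi(b_{12})+\Phi(p_{2})$) yields $\Phi(a_{12}b_{12}+b_{12}a_{12}^{*})+\Phi(a_{12}+a_{12}^{*})$, and Claims~\ref{claim21}(ii) and \ref{cliam22} split these into $\Phi(b_{12}a_{12}^{*})+\Phi(a_{12}b_{12})+\Phi(a_{12})+\Phi(a_{12}^{*})$. On the other hand, applying Claim~\ref{claim23}(i) to the three-slot decomposition gives $\Phi(b_{12}a_{12}^{*})+\Phi(a_{12})+\Phi(a_{12}b_{12}+a_{12}^{*})$. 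Equating the two expressions and cancelling the common terms $\Phi(b_{12}a_{12}^{*})$ and $\Phi(a_{12})$ leaves exactly $\Phi(a_{12}b_{12}+a_{12}^{*})=\Phi(a_{12}b_{12})+\Phi(a_{12}^{*})$, which is part (i).

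Part (ii) is the mirror image obtained by interchanging $p_{1}\leftrightarrow p_{2}$ and $\mathcal{A}_{12}\leftrightarrow\mathcal{A}_{21}$, using the product $c_{21}(d_{21}+p_{1})+(d_{21}+p_{1})c_{21}^{*}$, whose slots now fall in $\mathcal{A}_{22}$, $\mathcal{A}_{21}$ and $\mathcal{A}_{12}$; here one invokes Claims~\ref{claim21}(ii), \ref{cliam22} and \ref{claim23}(ii) in place of their counterparts. The case of the product $ab-ba^{*}$ is handled by the very same auxiliary product with the sign changed, e.g.\ $a_{12}(b_{12}+p_{2})-(b_{12}+p_{2})a_{12}^{*}=-b_{12}a_{12}^{*}+a_{12}+(a_{12}b_{12}-a_{12}^{*})$; the identical two-way evaluation and cancellation deliver $\Phi(a_{12}b_{12}-a_{12}^{*})=\Phi(a_{12}b_{12})+\Phi(-a_{12}^{*})$.

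I expect the only real difficulty to be organisational rather than conceptual: one must choose the perturbing projection so that every intermediate sum is supported on mutually distinct Peirce components before any of Claims~\ref{claim21}--\ref{claim23} is invoked, and one must keep track of the signs and involutions when expanding the preserved product. Once the auxiliary factor $b_{12}+p_{2}$ (resp.\ $d_{21}+p_{1}$) is fixed, the computation reduces to the routine Peirce bookkeeping used throughout the preceding claims, and no new structural input is required.
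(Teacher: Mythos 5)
Your proposal is correct and follows essentially the same route as the paper: the same auxiliary element $a_{12}(p_{2}+b_{12})+(p_{2}+b_{12})a_{12}^{*}$ (resp.\ its minus-variant), the same two-way evaluation of $\Phi$ on it, and the same cancellation. The only cosmetic difference is that you invoke Claim~\ref{claim23}(i) for the three-slot Peirce splitting where the paper cites Claim~\ref{claim24}; since the $\mathcal{A}_{22}$ component vanishes, the two are interchangeable here.
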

\begin{proof} First case: the product $ab+ba^{*}.$ First, we note that the following identity is valid
{\allowdisplaybreaks\begin{eqnarray*}\allowdisplaybreaks
a_{12}(p_{2}+b_{12})+(p_{2}+b_{12})a_{12}^{*}=a_{12}+a_{12}b_{12}+a_{12}^{*}+b_{12}a_{12}^{*}.
\end{eqnarray*}}
Hence, by Claim \ref{claim24} we have
{\allowdisplaybreaks\begin{eqnarray*}\allowdisplaybreaks
&&\Phi (a_{12})+\Phi (a_{12}b_{12}+a_{12}^{*})+\Phi (b_{12}a_{12}^{*})\\
&=&\Phi (a_{12}+a_{12}b_{12}+a_{12}^{*}+b_{12}a_{12}^{*})\\
&=&\Phi (a_{12}(p_{2}+b_{12})+(p_{2}+b_{12})a_{12}^{*})\\
&=&\Phi (a_{12})\Phi (p_{2}+b_{12})+\Phi (p_{2}+b_{12})\Phi (a_{12})^{*}\\
&=&\Phi (a_{12})(\Phi (p_{2})+\Phi (b_{12}))+(\Phi (p_{2})+\Phi (b_{12}))\Phi (a_{12})^{*}\\
&=&\Phi (a_{12})\Phi (p_{2})+\Phi (p_{2})\Phi (a_{12})^{*}+\Phi (a_{12})\Phi (b_{12})+\Phi (b_{12})\Phi (a_{12})^{*}\\
&=&\Phi (a_{12}p_{2}+p_{2}a_{12}^{*})+\Phi (a_{12}b_{12}+b_{12}a_{12}^{*})\\
&=&\Phi (a_{12})+\Phi (a_{12}^{*})+\Phi (a_{12}b_{12})+\Phi (b_{12}a_{12}^{*}).
\end{eqnarray*}}
This allows us to conclude that $\Phi (a_{12}b_{12}+a_{12}^{*})=\Phi (a_{12}b_{12})+ \Phi (a_{12}^{*}).$

Similarly, we prove the case (ii) using the identity
{\allowdisplaybreaks\begin{eqnarray*}\allowdisplaybreaks
c_{21}(p_{1}+d_{21})+(p_{1}+d_{21})c_{21}^{*}=c_{21}+c_{21}d_{21}+c_{21}^{*}+d_{21}c_{21}^{*}.
\end{eqnarray*}}

\noindent Second case: the product $ab-ba^{*}.$ First, we note that the following identity holds
{\allowdisplaybreaks\begin{eqnarray*}\allowdisplaybreaks
a_{12}(p_{2}+b_{12})-(p_{2}+b_{12})a_{12}^{*}=a_{12}+a_{12}b_{12}-a_{12}^{*}-b_{12}a_{12}^{*}.
\end{eqnarray*}}
Hence, by Claim \ref{claim24} we have
{\allowdisplaybreaks\begin{eqnarray*}\allowdisplaybreaks
&&\Phi (a_{12})+\Phi (a_{12}b_{12}-a_{12}^{*})+\Phi (-b_{12}a_{12}^{*})\\
&=&\Phi (a_{12}+a_{12}b_{12}-a_{12}^{*}-b_{12}a_{12}^{*})\\
&=&\Phi (a_{12}(p_{2}+b_{12})-(p_{2}+b_{12})a_{12}^{*})\\
&=&\Phi (a_{12})\Phi (p_{2}+b_{12})-\Phi (p_{2}+b_{12})\Phi (a_{12})^{*}\\
&=&\Phi (a_{12})(\Phi (p_{2})+\Phi (b_{12}))-(\Phi (p_{2})+\Phi (b_{12}))\Phi (a_{12})^{*}\\
&=&\Phi (a_{12})\Phi (p_{2})-\Phi (p_{2})\Phi (a_{12})^{*}+\Phi (a_{12})\Phi (b_{12})-\Phi (b_{12})\Phi (a_{12})^{*}\\
&=&\Phi (a_{12}p_{2}-p_{2}a_{12}^{*})+\Phi (a_{12}b_{12}-b_{12}a_{12}^{*})\\
&=&\Phi (a_{12})+\Phi (-a_{12}^{*})+\Phi (a_{12}b_{12})+\Phi (-b_{12}a_{12}^{*}).
\end{eqnarray*}}
This allows us to conclude that $\Phi (a_{12}b_{12}-a_{12}^{*})=\Phi (a_{12}b_{12})+ \Phi (-a_{12}^{*}).$

Similarly, we prove the case (ii) using the identity
{\allowdisplaybreaks\begin{eqnarray*}\allowdisplaybreaks
c_{21}(p_{1}+d_{21})-(p_{1}+d_{21})c_{21}^{*}=c_{21}+c_{21}d_{21}-c_{21}^{*}-d_{21}c_{21}^{*}.
\end{eqnarray*}}
\end{proof}

\begin{claim}\label{claim26} For arbitrary elements $a_{12},b_{12}\in \mathcal{A}_{12}$ and $c_{21},d_{21}\in \mathcal{A}_{21}$ hold: (i) $\Phi (a_{12}+b_{12})=\Phi (a_{12})+ \Phi (b_{12})$ and (ii) $\Phi (c_{21}+d_{21})=\Phi (c_{21})+ \Phi (d_{21}).$
\end{claim}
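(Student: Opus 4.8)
My plan is to prove (i) for the product $ab+ba^{*}$ (part (ii) and the product $ab-ba^{*}$ being symmetric), following the surjectivity-and-pin-down scheme of the earlier claims but now using Claim \ref{claim25} and the $C^{*}$-identity $\|xx^{*}\|=\|x\|^{2}$ to cope with the fact that $a_{12}$ and $b_{12}$ lie in the \emph{same} Peirce corner. By surjectivity I choose $f=f_{11}+f_{12}+f_{21}+f_{22}$ with $\Phi(f)=\Phi(a_{12})+\Phi(b_{12})$. Applying Lemma \ref{lem21} with the multiplier $p_{1}$ gives $\Phi(p_{1}f+fp_{1}^{*})=\Phi(a_{12})+\Phi(b_{12})=\Phi(f)$, whence $p_{1}f+fp_{1}^{*}=f$ and so $f_{11}=f_{22}=0$. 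To remove $f_{21}$ I multiply by an arbitrary $d_{12}\in\mathcal{A}_{12}$ and project the resulting identity onto $\mathcal{A}_{22}$ (the contributions of $a_{12},b_{12}$ vanish there, since $a_{12}d_{12}+d_{12}a_{12}^{*}$ and $b_{12}d_{12}+b_{12}^{*}d_{12}$ have no $\mathcal{A}_{22}$-part); with $\Phi(0)=0$ and injectivity this yields $f_{21}d_{12}+d_{12}^{*}f_{21}^{*}=0$ for all $d_{12}$, and the choice $d_{12}=f_{21}^{*}$ gives $2f_{21}f_{21}^{*}=0$, so $f_{21}=0$ by the $C^{*}$-norm condition. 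Thus $f=f_{12}$.

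It remains to identify $f_{12}$. Multiplying $\Phi(f_{12})=\Phi(a_{12})+\Phi(b_{12})$ first by $d_{21}\in\mathcal{A}_{21}$ and then by $p_{1}$ (two applications of Lemma \ref{lem21}) collapses the identity, since $F:=f_{12}d_{21}+d_{21}f_{12}^{*}$ has no $\mathcal{A}_{21}$-part, to
\[
\Phi\!\left(f_{12}d_{21}+d_{21}^{*}f_{12}^{*}\right)=\Phi\!\left(a_{12}d_{21}+d_{21}^{*}a_{12}^{*}\right)+\Phi\!\left(b_{12}d_{21}+d_{21}^{*}b_{12}^{*}\right),
\]
where every argument is a self-adjoint element of $\mathcal{A}_{11}$. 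If the right-hand side can be rewritten as $\Phi\big((a_{12}+b_{12})d_{21}+d_{21}^{*}(a_{12}+b_{12})^{*}\big)$, then injectivity gives $gd_{21}+(gd_{21})^{*}=0$ with $g:=f_{12}-a_{12}-b_{12}$; since the same holds with $d_{21}$ replaced by $id_{21}$, I obtain $gd_{21}=0$ for every $d_{21}\in\mathcal{A}_{21}$, and the non-degeneracy recalled before Claim \ref{claim21} forces $g=0$, i.e.\ $f_{12}=a_{12}+b_{12}$.

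The main obstacle is precisely this last rewriting: collapsing $\Phi(P)+\Phi(Q)$ into $\Phi(P+Q)$ for two elements of a \emph{single} corner, which is the very additivity being established. This is where Claim \ref{claim25} is essential, and the way I would break the circularity is to manufacture, by the same "evaluate one product in two ways" device used in Claim \ref{claim25}, a family of partial-additivity identities. For instance, taking $X=a_{12}$, $Y=p_{2}+c_{21}$ and equating the product expansion of $XY+YX^{*}$ with its component expansion via the additivity of Claims \ref{claim21}--\ref{claim24} yields $\Phi(a_{12}+c_{21}a_{12}^{*})=\Phi(a_{12})+\Phi(c_{21}a_{12}^{*})$, and taking $Y=p_{2}+b_{12}+c_{21}$ upgrades this (using Claim \ref{claim23} and Claim \ref{claim25}) to the $\mathcal{A}_{11}$-identity $\Phi(a_{12}c_{21}+b_{12}a_{12}^{*})=\Phi(a_{12}c_{21})+\Phi(b_{12}a_{12}^{*})$. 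Assembling such identities, together with the off-diagonal nilpotency $a_{12}^{2}=b_{12}^{2}=0$ (which annihilates one summand for the special multipliers $d_{21}=a_{12}^{*},b_{12}^{*}$), is what lets me combine the two images above and conclude (i). Part (ii) is the mirror image under $1\leftrightarrow 2$, and for the product $ab-ba^{*}$ the same steps go through after inserting the factors $i$ exactly as in Claim \ref{claim24}.
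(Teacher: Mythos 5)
Your diagnosis of the difficulty is exactly right: the pin-down scheme reduces everything to rewriting $\Phi(a_{12}d_{21}+d_{21}^{*}a_{12}^{*})+\Phi(b_{12}d_{21}+d_{21}^{*}b_{12}^{*})$ as $\Phi$ of the sum, which is single-corner additivity in $\mathcal{A}_{11}$ --- established only in Claim \ref{claim27}, whose proof in turn relies on Claim \ref{claim26}. But your proposed escape from this circle is not actually carried out, and the ingredients you list do not suffice. The identities you manufacture, $\Phi(a_{12}+c_{21}a_{12}^{*})=\Phi(a_{12})+\Phi(c_{21}a_{12}^{*})$ and $\Phi(a_{12}c_{21}+b_{12}a_{12}^{*})=\Phi(a_{12}c_{21})+\Phi(b_{12}a_{12}^{*})$, are additivity statements only for pairs of the special forms $(a_{12},\,c_{21}a_{12}^{*})$ and $(a_{12}c_{21},\,b_{12}a_{12}^{*})$, and you never show that these cover the pairs $\bigl(a_{12}d_{21}+d_{21}^{*}a_{12}^{*},\,b_{12}d_{21}+d_{21}^{*}b_{12}^{*}\bigr)$ you actually need, for enough multipliers $d_{21}$ to pin down $f_{12}$. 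Moreover, the remark that the nilpotency $a_{12}^{2}=0$ ``annihilates one summand'' for $d_{21}=a_{12}^{*}$ or $b_{12}^{*}$ is wrong: the resulting terms $a_{12}a_{12}^{*}$ and $b_{12}b_{12}^{*}$ lie in $\mathcal{A}_{11}$, are not squares of off-diagonal elements, and by the $C^{*}$-identity are nonzero whenever $a_{12},b_{12}$ are. So the final assembly step is a genuine gap, not a routine verification.

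The fix is to choose the test product so that $a_{12}+b_{12}$ appears as an entire Peirce component, which removes the need for any same-corner additivity. The paper evaluates $\Phi\bigl((p_{1}+a_{12})(p_{2}+b_{12})+(p_{2}+b_{12})(p_{1}+a_{12})^{*}\bigr)$ in two ways: the product equals $a_{12}+b_{12}+a_{12}b_{12}+a_{12}^{*}+b_{12}a_{12}^{*}$, whose $\mathcal{A}_{12}$-part is exactly $a_{12}+b_{12}$, whose $\mathcal{A}_{21}$-part $a_{12}b_{12}+a_{12}^{*}$ is handled by Claim \ref{claim25}, and whose $\mathcal{A}_{11}$-part is $b_{12}a_{12}^{*}$; Claim \ref{claim24} splits $\Phi$ across the corners, while the multiplicative expansion of $\Phi(p_{1}+a_{12})\Phi(p_{2}+b_{12})+\Phi(p_{2}+b_{12})\Phi(p_{1}+a_{12})^{*}$ (using Claim \ref{claim21}) produces $\Phi(b_{12})+\Phi(a_{12})+\Phi(a_{12}^{*})+\Phi(a_{12}b_{12})+\Phi(b_{12}a_{12}^{*})$. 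Cancelling gives (i) directly, with no surjectivity argument and no $C^{*}$-norm input. This is precisely the ``evaluate one product in two ways'' device you invoke for your auxiliary identities --- you only needed to apply it with $X=p_{1}+a_{12}$ and $Y=p_{2}+b_{12}$.
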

\begin{proof} First case: the product $ab+ba^{*}.$ First, we note that the following identity is valid
{\allowdisplaybreaks\begin{multline*}\allowdisplaybreaks
(p_{1}+a_{12})(p_{2}+b_{12})+(p_{2}+b_{12})(p_{1}+a_{12})^{*}=a_{12}+b_{12}+a_{12}b_{12}+a_{12}^{*}+b_{12}a_{12}^{*}.
\end{multline*}}
Hence, by Claims \ref{claim24} and \ref{claim25} we have
{\allowdisplaybreaks\begin{eqnarray*}\allowdisplaybreaks
&&\Phi (a_{12}+b_{12})+\Phi (a_{12}b_{12})+\Phi (a_{12}^{*})+\Phi (b_{12}a_{12}^{*})\\
&=&\Phi (a_{12}+b_{12})+\Phi (a_{12}b_{12}+a_{12}^{*})+\Phi (b_{12}a_{12}^{*})\\
&=&\Phi (a_{12}+b_{12}+a_{12}b_{12}+a_{12}^{*}+b_{12}a_{12}^{*})\\
&=&\Phi ((p_{1}+a_{12})(p_{2}+b_{12})+(p_{2}+b_{12})(p_{1}+a_{12})^{*})\\
&=&\Phi (p_{1}+a_{12})\Phi (p_{2}+b_{12})+\Phi (p_{2}+b_{12})\Phi (p_{1}+a_{12})^{*}\\
&=&(\Phi (p_{1})+\Phi (a_{12}))(\Phi (p_{2})+\Phi (b_{12}))\\
&&+(\Phi (p_{2})+\Phi (b_{12}))(\Phi (p_{1})^{*}+\Phi (a_{12})^{*})\\
&=&\Phi (p_{1})\Phi (p_{2})+\Phi (p_{2})\Phi (p_{1})^{*}+\Phi (p_{1})\Phi (b_{12})+\Phi (b_{12})\Phi (p_{1})^{*}\\
&&+\Phi (a_{12})\Phi (p_{2})+\Phi (p_{2})\Phi (a_{12})^{*}+\Phi (a_{12})\Phi (b_{12})+\Phi (b_{12})\Phi (a_{12})^{*}\\
&=&\Phi (p_{1}p_{2}+p_{2}p_{1}^{*})+\Phi (p_{1}b_{12}+b_{12}p_{1}^{*})+\Phi (a_{12}p_{2}+p_{2}a_{12}^{*})\\
&&+\Phi (a_{12}b_{12}+b_{12}a_{12}^{*})\\
&=&\Phi (b_{12})+\Phi (a_{12})+\Phi (a_{12}^{*})+\Phi (a_{12}b_{12})+\Phi (b_{12}a_{12}^{*}).
\end{eqnarray*}}
This allows us to conclude that $\Phi (a_{12}+b_{12})=\Phi (a_{12})+ \Phi (b_{12}).$

Similarly, we prove the case (ii) using the identity
{\allowdisplaybreaks\begin{multline*}\allowdisplaybreaks
(p_{2}+c_{21})(p_{1}+d_{21})+(p_{1}+d_{21})(p_{2}+c_{21})^{*}=c_{21}+d_{21}+c_{21}d_{21}+c_{21}^{*}+d_{21}c_{21}^{*}.
\end{multline*}}

\noindent Second case: the product $ab-ba^{*}.$ First, we note that the following identity holds
{\allowdisplaybreaks\begin{multline*}\allowdisplaybreaks
(p_{1}+a_{12})(p_{2}+b_{12})-(p_{2}+b_{12})(p_{1}+a_{12})^{*}=a_{12}+b_{12}+a_{12}b_{12}-a_{12}^{*}-b_{12}a_{12}^{*}.
\end{multline*}}
Hence, by Claims \ref{claim24} and \ref{claim25} we have
{\allowdisplaybreaks\begin{eqnarray*}\allowdisplaybreaks
&&\Phi (a_{12}+b_{12})+\Phi (a_{12}b_{12})+\Phi (-a_{12}^{*})+\Phi (-b_{12}a_{12}^{*})\\
&=&\Phi (a_{12}+b_{12})+\Phi (a_{12}b_{12}-a_{12}^{*})+\Phi (-b_{12}a_{12}^{*})\\
&=&\Phi (a_{12}+b_{12}+a_{12}b_{12}-a_{12}^{*}-b_{12}a_{12}^{*})\\
&=&\Phi ((p_{1}+a_{12})(p_{2}+b_{12})-(p_{2}+b_{12})(p_{1}+a_{12})^{*})\\
&=&\Phi (p_{1}+a_{12})\Phi (p_{2}+b_{12})-\Phi (p_{2}+b_{12})\Phi (p_{1}+a_{12})^{*}\\
&=&(\Phi (p_{1})+\Phi (a_{12}))(\Phi (p_{2})+\Phi (b_{12}))\\
&&-(\Phi (p_{2})+\Phi (b_{12}))(\Phi (p_{1})^{*}+\Phi (a_{12})^{*})\\
&=&\Phi (p_{1})\Phi (p_{2})-\Phi (p_{2})\Phi (p_{1})^{*}+\Phi (p_{1})\Phi (b_{12})-\Phi (b_{12})\Phi (p_{1})^{*}\\
&&+\Phi (a_{12})\Phi (p_{2})-\Phi (p_{2})\Phi (a_{12})^{*}+\Phi (a_{12})\Phi (b_{12})-\Phi (b_{12})\Phi (a_{12})^{*}\\
&=&\Phi (p_{1}p_{2}-p_{2}p_{1}^{*})+\Phi (p_{1}b_{12}-b_{12}p_{1}^{*})+\Phi (a_{12}p_{2}-p_{2}a_{12}^{*})\\
&&+\Phi (a_{12}b_{12}-b_{12}a_{12}^{*})\\
&=&\Phi (b_{12})+\Phi (a_{12})+\Phi (-a_{12}^{*})+\Phi (a_{12}b_{12})+\Phi (-b_{12}a_{12}^{*}).
\end{eqnarray*}}
This allows us to conclude that $\Phi (a_{12}+b_{12})=\Phi (a_{12})+ \Phi (b_{12}).$

Similarly, we prove the case (ii) using the identity
{\allowdisplaybreaks\begin{multline*}\allowdisplaybreaks
(p_{2}+c_{21})(p_{1}+d_{21})-(p_{1}+d_{21})(p_{2}+c_{21})^{*}=c_{21}+d_{21}+c_{21}d_{21}-c_{21}^{*}-d_{21}c_{21}^{*}.
\end{multline*}}
\end{proof}

\begin{claim}\label{claim27} For arbitrary elements $a_{11},b_{11}\in \mathcal{A}_{11}$ and $c_{22},d_{22}\in \mathcal{A}_{22}$ hold: (i) $\Phi (a_{11}+b_{11})=\Phi (a_{11})+\Phi (b_{11})$ and (ii) $\Phi (c_{22}+d_{22})=\Phi (c_{22})+\Phi (d_{22}).$
\end{claim}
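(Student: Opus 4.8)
The plan is to prove part (i) in full and to obtain part (ii) by the symmetric argument with the roles of $p_1$ and $p_2$ (equivalently of $\mathcal{A}_{12}$ and $\mathcal{A}_{21}$) interchanged, treating the two products $ab+ba^{*}$ and $ab-ba^{*}$ separately. In each case I would start from surjectivity, choosing $f=f_{11}+f_{12}+f_{21}+f_{22}$ with $\Phi(f)=\Phi(a_{11})+\Phi(b_{11})$, and then determine the Peirce components of $f$ one block at a time: the unwanted blocks are annihilated by pairing $f$ against a suitable projection through Lemma \ref{lem21}, while the surviving diagonal block $f_{11}$ is identified with $a_{11}+b_{11}$ by pairing against arbitrary off-diagonal elements and invoking the faithfulness statement from \cite[Theorem 2.2]{FerGuz1} together with the additivity already secured in Claims \ref{claim21}--\ref{claim26}.

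For the product $ab+ba^{*}$ the argument is short. Applying Lemma \ref{lem21}(ii) with $t=p_2$, and using $p_2a_{11}+a_{11}p_2^{*}=0=p_2b_{11}+b_{11}p_2^{*}$ together with Lemma \ref{lem22}, injectivity forces $p_2f+fp_2^{*}=0$, whence $f_{12}=f_{21}=f_{22}=0$ by directness of the Peirce decomposition. From $\Phi(f_{11})=\Phi(a_{11})+\Phi(b_{11})$ I would then apply Lemma \ref{lem21}(i) with an arbitrary $t=d_{12}\in\mathcal{A}_{12}$; the relation $\mathcal{A}_{12}\mathcal{A}_{11}=0$ collapses both sides so that $\Phi(f_{11}d_{12})=\Phi(a_{11}d_{12})+\Phi(b_{11}d_{12})$, and Claim \ref{claim26}(i) combines the right-hand side into $\Phi((a_{11}+b_{11})d_{12})$. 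Injectivity gives $f_{11}d_{12}=(a_{11}+b_{11})d_{12}$ for every $d_{12}$, and faithfulness yields $f_{11}=a_{11}+b_{11}$, hence $f=a_{11}+b_{11}$.

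For the product $ab-ba^{*}$ the first pairing, Lemma \ref{lem21}(ii) with $t=p_1$, only gives $f_{12}=f_{21}=0$, since now $p_1a_{11}-a_{11}p_1^{*}=0$; so the diagonal component $f_{22}$ must be eliminated separately. I would do this by pairing $f$ against $t=d_{12}\in\mathcal{A}_{12}$ through Lemma \ref{lem21}(ii): the right-hand side vanishes identically, leaving $\Phi(d_{12}f_{22}-f_{22}d_{12}^{*})=\Phi(0)$, so directness and faithfulness force $f_{22}=0$. To identify $f_{11}$ I would pair against $t=d_{21}\in\mathcal{A}_{21}$ by Lemma \ref{lem21}(ii) and then pair the resulting identity against $p_1$ by Lemma \ref{lem21}(i); this symmetrizes the expression into $\Phi(d_{21}f_{11}-f_{11}^{*}d_{21}^{*})=\Phi(d_{21}a_{11}-a_{11}^{*}d_{21}^{*})+\Phi(d_{21}b_{11}-b_{11}^{*}d_{21}^{*})$, in which each argument has the form $w-w^{*}$ with $w\in\mathcal{A}_{21}$.

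The step I expect to be the main obstacle is exactly this last identification, because the arguments $w-w^{*}$ straddle $\mathcal{A}_{12}$ and $\mathcal{A}_{21}$ and so cannot be disentangled by Peirce directness alone, while full additivity of $\Phi$ is not yet available. The device that resolves it is to route the recombination of the two right-hand terms through the cross-corner additivity of Claim \ref{cliam22} and the single-corner additivity of Claim \ref{claim26}: writing $\Phi(w-w^{*})=\Phi(-w^{*})+\Phi(w)$ by Claim \ref{cliam22}, combining the $\mathcal{A}_{12}$- and $\mathcal{A}_{21}$-parts by the two halves of Claim \ref{claim26}, and reassembling by Claim \ref{cliam22} once more, one reduces the right-hand side to $\Phi\big((d_{21}(a_{11}+b_{11}))-(d_{21}(a_{11}+b_{11}))^{*}\big)$. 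Injectivity and comparison of $\mathcal{A}_{21}$-parts then give $d_{21}f_{11}=d_{21}(a_{11}+b_{11})$ for all $d_{21}$, and the dual faithfulness statement forces $f_{11}=a_{11}+b_{11}$. Combined with $f_{22}=0$ this gives $f=a_{11}+b_{11}$ and hence the claim; part (ii) follows by interchanging $p_1\leftrightarrow p_2$.
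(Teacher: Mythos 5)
Your proposal is correct and follows essentially the same route as the paper: choose a preimage $f$ of $\Phi (a_{11})+\Phi (b_{11})$, annihilate the unwanted Peirce components by pairing with a projection (and, for the product $ab-ba^{*}$, additionally with $d_{12}\in \mathcal{A}_{12}$ to remove $f_{22}$), and identify $f_{11}$ with $a_{11}+b_{11}$ by pairing with off-diagonal elements, recombining the right-hand side through Claims \ref{cliam22} and \ref{claim26}, and invoking the faithfulness property of the Peirce corners. The only deviations are cosmetic: in the first case you multiply by $d_{12}$ on the right so that one term vanishes outright (the paper pairs with $d_{21}$ and recombines both surviving terms via Claims \ref{claim24} and \ref{claim26}), and in the second case you insert an extra symmetrizing pairing with $p_{1}$ that the paper's direct splitting of $d_{21}f_{11}-f_{11}d_{21}^{*}$ into its $\mathcal{A}_{21}$ and $\mathcal{A}_{12}$ parts renders unnecessary.
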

\begin{proof} First case: the product $ab+ba^{*}.$ Choose $f=f_{11}+f_{12}+f_{21}+f_{22}\in \mathcal{A}$ such that $\Phi (f)=\Phi (a_{11}) + \Phi (b_{11})$. By Claim \ref{claim26}(i) we have
{\allowdisplaybreaks\begin{eqnarray*}\allowdisplaybreaks
\Phi (p_{2}f+fp_{2}^{*})=\Phi (p_{2}a_{11}+a_{11}p_{2}^{*})+\Phi (p_{2}b_{11}+b_{11}p_{2}^{*})=0.
\end{eqnarray*}}
This results that $p_{2}f+fp_{2}^{*}=0$ which implies that $f_{12}=0,$ $f_{21}=0$ and $f_{22}=0.$ It follows that $\Phi (f_{11})=\Phi (a_{11})+\Phi (b_{11}).$ Hence, for an arbitrary element $d_{21}\in \mathcal{A}_{21}$ we have 
{\allowdisplaybreaks\begin{eqnarray*}\allowdisplaybreaks
&&\Phi (d_{21}f_{11}+f_{11}d_{21}^{*})\\
&=&\Phi (d_{21}a_{11}+a_{11}d_{21}^{*})+\Phi (d_{21}b_{11}+b_{11}d_{21}^{*})\\
&=&\Phi (d_{21}(a_{11}+b_{11})+(a_{11}+b_{11})d_{21}^{*}),
\end{eqnarray*}}
by Claims \ref{claim24} and \ref{claim26}. This implies that $d_{21}f_{11}+f_{11}d_{21}^{*}=d_{21}(a_{11}+b_{11})+(a_{11}+b_{11})d_{21}^{*}$ which results in $f_{11}=a_{11}+b_{11}.$

Similarly, we prove the case (ii).

\noindent Second case: the product $ab-ba^{*}.$ Choose $f=f_{11}+f_{12}+f_{21}+f_{22}\in \mathcal{A}$ such that $\Phi (f)=\Phi (a_{11}) + \Phi (b_{11})$. Then {\allowdisplaybreaks\begin{eqnarray*}\allowdisplaybreaks
\Phi (p_{1}f-fp_{1}^{*})=\Phi (p_{1}a_{11}-a_{11}p_{1}^{*})+\Phi (p_{1}b_{11}-b_{11}p_{1}^{*})=0
\end{eqnarray*}}
from which we get $p_{1}f-fp_{1}^{*}=0$ which leads to $f_{12}=0$ and $f_{21}=0.$ This show that $\Phi (f_{11}+f_{22})=\Phi (a_{11})+\Phi (b_{11}).$ Hence, for any element $d_{21}\in \mathcal{A}_{21}$ we have 
{\allowdisplaybreaks\begin{eqnarray*}\allowdisplaybreaks
&&\Phi (d_{21}(f_{11}+f_{22})-(f_{11}+f_{22})d_{21}^{*})\\
&=&\Phi (d_{21}a_{11}-a_{11}d_{21}^{*})+\Phi (d_{21}b_{11}-b_{11}d_{21}^{*})\\
&=&\Phi (d_{21}(a_{11}+b_{11})-(a_{11}+b_{11})d_{21}^{*}),
\end{eqnarray*}}
by Claims \ref{claim24} and \ref{claim26}. This implies that $d_{21}(f_{11}+f_{22})-(f_{11}+f_{22})d_{21}^{*}=d_{21}(a_{11}+b_{11})-(a_{11}+b_{11})d_{21}^{*}$ which results that $d_{21}f_{11}=d_{21}(a_{11}+b_{11})$ and $f_{11}d_{21}^{*}=(a_{11}+b_{11})d_{21}^{*}.$ As consequence, we obtain $f_{11}=a_{11}+b_{11}.$ It follows that $\Phi (a_{11}+f_{22})=\Phi (a_{11})+\Phi (b_{11}).$ Hence, for any element $d_{12}\in \mathcal{A}_{21}$ we have 
{\allowdisplaybreaks\begin{eqnarray*}\allowdisplaybreaks
&&\Phi (d_{12}(a_{11}+f_{22})-(a_{11}+f_{22})d_{12}^{*})\\
&=&\Phi (d_{12}a_{11}-a_{11}d_{12}^{*})+\Phi (d_{12}b_{11}-b_{11}d_{12}^{*})\\
&=&0.
\end{eqnarray*}}
This implies that $d_{12}(a_{11}+f_{22})-(a_{11}+f_{22})d_{12}^{*}=0$ which results that $d_{12}f_{22}=0$ and $f_{22}d_{12}^{*}=0.$ As consequence, we obtain $f_{22}=0.$

Similarly, we prove the case (ii).
\end{proof}

\begin{claim}\label{claim28} $\Phi $ is an additive mapping.
\end{claim}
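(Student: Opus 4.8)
The plan is to reduce the full additivity of $\Phi$ to the partial additivity results already established, by exploiting the Peirce decomposition $\mathcal{A}=\mathcal{A}_{11}\oplus\mathcal{A}_{12}\oplus\mathcal{A}_{21}\oplus\mathcal{A}_{22}$. The genuine analytic work has already been done in Claims \ref{claim24}, \ref{claim26} and \ref{claim27}; what remains is purely a matter of assembling these pieces. Moreover, the argument is identical for both products $ab+ba^{*}$ and $ab-ba^{*}$, since only the additivity conclusions of those claims are invoked and no fresh use of the defining identity (\ref{fundident}) is needed.

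First I would fix two arbitrary elements $a,b\in\mathcal{A}$ and write their Peirce decompositions $a=\sum_{i,j}a_{ij}$ and $b=\sum_{i,j}b_{ij}$, with $a_{ij},b_{ij}\in\mathcal{A}_{ij}$ $(i,j=1,2)$. Since each sum $a_{ij}+b_{ij}$ again lies in $\mathcal{A}_{ij}$, the four elements $a_{11}+b_{11}$, $a_{12}+b_{12}$, $a_{21}+b_{21}$, $a_{22}+b_{22}$ occupy the four distinct Peirce components. Applying Claim \ref{claim24} to this collection yields
\[
\Phi(a+b)=\Phi\bigl((a_{11}+b_{11})+(a_{12}+b_{12})+(a_{21}+b_{21})+(a_{22}+b_{22})\bigr)=\sum_{i,j}\Phi(a_{ij}+b_{ij}).
\]

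Next I would invoke intra-component additivity: Claim \ref{claim27} gives $\Phi(a_{11}+b_{11})=\Phi(a_{11})+\Phi(b_{11})$ and $\Phi(a_{22}+b_{22})=\Phi(a_{22})+\Phi(b_{22})$, while Claim \ref{claim26} gives $\Phi(a_{12}+b_{12})=\Phi(a_{12})+\Phi(b_{12})$ and $\Phi(a_{21}+b_{21})=\Phi(a_{21})+\Phi(b_{21})$. Substituting these into the display above and regrouping produces $\Phi(a+b)=\sum_{i,j}\Phi(a_{ij})+\sum_{i,j}\Phi(b_{ij})$. A second application of Claim \ref{claim24}, this time to the decompositions of $a$ and of $b$ separately, identifies $\sum_{i,j}\Phi(a_{ij})=\Phi(a)$ and $\sum_{i,j}\Phi(b_{ij})=\Phi(b)$, so that $\Phi(a+b)=\Phi(a)+\Phi(b)$, as desired.

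I expect no genuine obstacle at this stage: each step is merely an invocation of an already proven claim, and the only care required is bookkeeping to guarantee that Claim \ref{claim24} is applied solely to families of elements lying in pairwise distinct Peirce components — which is automatic here because $a_{ij}+b_{ij}\in\mathcal{A}_{ij}$. Consequently the two cases $ab\pm ba^{*}$ collapse into a single, uniform argument.
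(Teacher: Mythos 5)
Your argument is correct and is precisely the assembly the paper intends when it states that the claim is "an immediate consequence of Claims \ref{claim24}, \ref{claim26} and \ref{claim27}": split $\Phi(a+b)$ across Peirce components via Claim \ref{claim24}, apply the intra-component additivity of Claims \ref{claim26} and \ref{claim27}, and recombine with Claim \ref{claim24}. No difference in approach; you have merely written out the routine bookkeeping the paper leaves implicit.
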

\begin{proof} The result is an immediate consequence of Claims \ref{claim24}, \ref{claim26} and \ref{claim27}.
\end{proof}

\begin{lemma}\label{lem24} Let $\mathcal{B}$ be an alternative $W^{*}$-factor with identity $1_{\mathcal{B}}$ and an element $u\in \mathcal{B}.$ If $\mathcal{B}$ satisfies the condition $uv+vu^{*}=0,$ for all element $v\in \mathcal{B},$ then $u\in i\mathds{R}1_{\mathcal{B}}$ ($i$ is the imaginary number unit) (resp., If $\mathcal{B}$ satisfies the condition $uv-vu^{*}=0,$ for all element $v\in \mathcal{B},$ then $u\in \mathds{R}1_{\mathcal{B}}.$).
\end{lemma}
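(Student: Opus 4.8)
The plan is to run both cases in parallel, first extracting an adjoint symmetry of $u$, then its commutation with all of $\mathcal{B}$, and finally placing it in the center.

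First I would specialize the hypothesis to $v=1_{\mathcal{B}}$, which exists because nonzero alternative $W^{*}$-algebras are unital. In the product $ab+ba^{*}$ case this gives $u1_{\mathcal{B}}+1_{\mathcal{B}}u^{*}=u+u^{*}=0$, so $u^{*}=-u$; in the product $ab-ba^{*}$ case it gives $u-u^{*}=0$, so $u^{*}=u$. Substituting these back into the standing identity, and using $1_{\mathcal{B}}^{*}=1_{\mathcal{B}}$ together with the conjugate-linearity of the involution, turns both hypotheses into $uv-vu=0$ for every $v\in\mathcal{B}$; that is, $u$ commutes with all of $\mathcal{B}$.

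The hard part will be upgrading ``commutes with everything'' to ``lies in the center'', since in an alternative algebra commuting need not entail associating. Here I would use that in any alternative ring the associator $(x,y,z)=(xy)z-x(yz)$ is alternating: linearizing the defining identities $a^{2}b=a(ab)$ and $ba^{2}=(ba)a$ yields $(x,y,z)=-(y,x,z)$ and $(x,y,z)=-(x,z,y)$. Feeding $uv=vu$ into the even permutation $(u,x,y)=(x,y,u)$ and the transposition $(u,x,y)=-(x,u,y)$ produces two linear relations among $P=(xu)y$, $Q=x(uy)$ and $R=(xy)u$ whose only solution is $P=Q=R$; hence $(u,x,y)=(ux)y-u(xy)=P-R=0$ for all $x,y$. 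By the alternating property this makes $u$ nuclear, and as it also commutes with everything, $u\in Z(\mathcal{B})$.

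Finally, since $\mathcal{B}$ is a factor we have $Z(\mathcal{B})=\mathds{C}1_{\mathcal{B}}$, so $u=\lambda 1_{\mathcal{B}}$ for some $\lambda\in\mathds{C}$. Applying the involution gives $u^{*}=\bar{\lambda}1_{\mathcal{B}}$, so the relation from the first step pins down $\lambda$: in the $ab+ba^{*}$ case $\bar{\lambda}=-\lambda$ forces $\lambda\in i\mathds{R}$, whence $u\in i\mathds{R}1_{\mathcal{B}}$, and in the $ab-ba^{*}$ case $\bar{\lambda}=\lambda$ forces $\lambda\in\mathds{R}$, whence $u\in\mathds{R}1_{\mathcal{B}}$. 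I would note that in the associative (von Neumann factor) situation the centrality step is classical, so the genuine content of the lemma is precisely the non-associative case, which the alternating-associator computation resolves; the remaining steps are routine specializations of the hypothesis.
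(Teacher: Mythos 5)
Your proof follows the same route as the paper's: specialize to $v=1_{\mathcal{B}}$ to get $u^{*}=\mp u$, substitute back to see that $u$ commutes with every element of $\mathcal{B}$, invoke $Z(\mathcal{B})=\mathds{C}1_{\mathcal{B}}$ for a factor, and read the constraint on $\lambda$ off of $u^{*}=\mp u$. The only place you go beyond the paper is the passage from ``$u$ commutes with everything'' to ``$u\in Z(\mathcal{B})$'': the paper asserts this without comment, whereas your alternating-associator computation (the relations forcing $P=Q=R$, with the division by $3$ harmless in a complex algebra) actually shows a commuting element is nuclear, which is the genuinely non-associative content of the step.
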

\begin{proof} First case: the product $ab+ba^{*}.$ Taking $v=1_{\mathcal{B}}$ we get $u+u^{*}=0.$ Replacing this last result in given condition, we obtain $uv-vu=0,$ for all element $v\in \mathcal{B}.$ This implies that $u$ belongs to
the center of $\mathcal{B}$ which results that $u\in i\mathds{R}1_{\mathcal{B}}.$

\noindent Second case: the product $ab-ba^{*}.$ Replacing $v$ with $1_{\mathcal{B}}$ we get $u-u^{*}=0.$ It follows that $uv-vu=0,$ for all element $v\in \mathcal{B},$ which implies that $u$ belongs to the center of $\mathcal{B}.$ This results that $u\in \mathds{R}1_{\mathcal{B}}.$
\end{proof}

In the remainder of this paper, we prove that $\Phi $ is a $\ast $-ring isomorphism. From now on, we assume that all lemmas satisfy the conditions of the Main Theorem.

\noindent First case: the product $ab+ba^{*}.$ 

\begin{lemma}\label{lem25} $\Phi (i\mathds{R}1_{\mathcal{A}})=i\mathds{R}1_{\mathcal{B}}$ and $\Phi (\mathds{C}1_{\mathcal{A}})=\mathds{C}1_{\mathcal{B}}.$
\end{lemma}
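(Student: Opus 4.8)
The plan is to identify the image of the central subspaces $i\mathbb{R}1_{\mathcal{A}}$ and $\mathbb{C}1_{\mathcal{A}}$ under $\Phi$ by exploiting the algebraic characterization of these sets provided by Lemma~\ref{lem24}. Since $\Phi$ is now known to be additive (Claim~\ref{claim28}), bijective, and to satisfy the defining identity \eqref{fundident}, the strategy is to translate the property ``$u$ lies in $i\mathbb{R}1_{\mathcal{A}}$'' into a statement that transfers cleanly across $\Phi$. First I would recall from Lemma~\ref{lem24} that an element $u\in\mathcal{A}$ satisfies $uv+vu^{*}=0$ for all $v\in\mathcal{A}$ precisely when $u\in i\mathbb{R}1_{\mathcal{A}}$; the same statement holds in $\mathcal{B}$. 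The key observation is that if $u\in i\mathbb{R}1_{\mathcal{A}}$, then for every $v\in\mathcal{A}$ we have $\Phi(u)\Phi(v)+\Phi(v)\Phi(u)^{*}=\Phi(uv+vu^{*})=\Phi(0)=0$ by the fundamental identity and Lemma~\ref{lem22}. Because $\Phi$ is surjective, $\Phi(v)$ ranges over all of $\mathcal{B}$ as $v$ ranges over $\mathcal{A}$, so $\Phi(u)w+w\Phi(u)^{*}=0$ for every $w\in\mathcal{B}$; applying Lemma~\ref{lem24} in $\mathcal{B}$ yields $\Phi(u)\in i\mathbb{R}1_{\mathcal{B}}$. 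Thus $\Phi(i\mathbb{R}1_{\mathcal{A}})\subseteq i\mathbb{R}1_{\mathcal{B}}$.

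The reverse inclusion follows by running the same argument with $\Phi^{-1}$ in place of $\Phi$, which is legitimate because $\Phi^{-1}$ is also an additive bijection satisfying the analogous identity. Given $w\in i\mathbb{R}1_{\mathcal{B}}$, write $w=\Phi(u)$ for a unique $u$; the condition $\Phi(u)w'+w'\Phi(u)^{*}=0$ for all $w'\in\mathcal{B}$ pulls back through $\Phi^{-1}$ to $uv+vu^{*}=0$ for all $v\in\mathcal{A}$, forcing $u\in i\mathbb{R}1_{\mathcal{A}}$ by Lemma~\ref{lem24}. Combining the two inclusions gives the equality $\Phi(i\mathbb{R}1_{\mathcal{A}})=i\mathbb{R}1_{\mathcal{B}}$, which is the first assertion.

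For the second assertion, $\Phi(\mathbb{C}1_{\mathcal{A}})=\mathbb{C}1_{\mathcal{B}}$, I would decompose $\mathbb{C}1_{\mathcal{A}}=\mathbb{R}1_{\mathcal{A}}\oplus i\mathbb{R}1_{\mathcal{A}}$ and use additivity of $\Phi$ together with the first part. The subspace $i\mathbb{R}1_{\mathcal{A}}$ is already handled; what remains is to locate $\Phi(\mathbb{R}1_{\mathcal{A}})$, and more specifically $\Phi(1_{\mathcal{A}})$. Here the natural move is to compute $\Phi$ on products involving $1_{\mathcal{A}}$: from $a\cdot 1_{\mathcal{A}}+1_{\mathcal{A}}a^{*}=a+a^{*}$ one gets relations pinning down how $\Phi(1_{\mathcal{A}})$ interacts multiplicatively with every element of $\mathcal{B}$, and one expects $\Phi(1_{\mathcal{A}})$ to be a unitary-type or self-adjoint central element, ultimately a real scalar multiple of $1_{\mathcal{B}}$. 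Once $\Phi(1_{\mathcal{A}})\in\mathbb{R}1_{\mathcal{B}}$ is established, additivity combined with $\Phi(i\mathbb{R}1_{\mathcal{A}})=i\mathbb{R}1_{\mathcal{B}}$ gives $\Phi(\mathbb{C}1_{\mathcal{A}})\subseteq\mathbb{C}1_{\mathcal{B}}$, and the symmetric argument via $\Phi^{-1}$ gives the reverse inclusion.

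The main obstacle I anticipate is the second assertion rather than the first: the characterization of $i\mathbb{R}1$ in Lemma~\ref{lem24} is tailor-made for the $ab+ba^{*}$ product and transfers almost mechanically, whereas $\mathbb{R}1$ has no such clean one-sided description in terms of the product $ab+ba^{*}$ alone (indeed, real scalars $\lambda1$ satisfy $\lambda v+v\lambda=2\lambda v\neq0$). The delicate point will be showing that $\Phi$ maps the real line $\mathbb{R}1_{\mathcal{A}}$ into $\mathbb{R}1_{\mathcal{B}}$ and not into some larger set; this likely requires combining the already-established image of $i\mathbb{R}1_{\mathcal{A}}$ with the selfadjointness structure, for instance by testing $\Phi$ against the identity $1_{\mathcal{A}}\cdot1_{\mathcal{A}}+1_{\mathcal{A}}1_{\mathcal{A}}^{*}=2\cdot1_{\mathcal{A}}$ and using that $\mathcal{B}$ is a factor so its center is exactly $\mathbb{C}1_{\mathcal{B}}$. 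I expect the factor condition to be exactly what rules out spurious elements and forces the scalar to be real.
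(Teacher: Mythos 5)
Your argument for the first assertion, $\Phi (i\mathds{R}1_{\mathcal{A}})=i\mathds{R}1_{\mathcal{B}},$ is correct and is exactly the paper's: transport the condition of Lemma~\ref{lem24} through the fundamental identity using $\Phi (0)=0$ and surjectivity, then symmetrize with $\Phi ^{-1}.$

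The second assertion is where there is a genuine gap. Your plan is to write $\mathds{C}1_{\mathcal{A}}=\mathds{R}1_{\mathcal{A}}\oplus i\mathds{R}1_{\mathcal{A}},$ locate $\Phi (1_{\mathcal{A}}),$ and then invoke additivity. But additivity only gives $\mathds{Q}$-homogeneity: knowing $\Phi (1_{\mathcal{A}})$ tells you nothing about $\Phi (\lambda 1_{\mathcal{A}})$ for irrational $\lambda ,$ so the problem does not reduce to the single element $1_{\mathcal{A}}$; you must show $\Phi (\lambda 1_{\mathcal{A}})\in \mathds{C}1_{\mathcal{B}}$ for each scalar $\lambda $ separately. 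Moreover, your sketch for pinning down $\Phi (1_{\mathcal{A}})$ as ``a real scalar multiple of $1_{\mathcal{B}}$'' is circular relative to the paper's architecture: the fact that $\Phi (1_{\mathcal{A}})$ is a scalar is a consequence of the present lemma (it is the first line of the proof of Lemma~\ref{lem26}), not an input to it. The missing idea is a two-step use of the first assertion: (1) for $a^{*}=-a$ one has $a(i1_{\mathcal{A}})+(i1_{\mathcal{A}})a^{*}=0,$ and since $\Phi (i1_{\mathcal{A}})\in i\mathds{R}1_{\mathcal{B}}$ this forces $\Phi (a)^{*}=-\Phi (a),$ i.e.\ $\Phi $ preserves anti-self-adjointness (in both directions, via $\Phi ^{-1}$); (2) for any $\lambda \in \mathds{C}$ and any anti-self-adjoint $a$ one has $a(\lambda 1_{\mathcal{A}})+(\lambda 1_{\mathcal{A}})a^{*}=0,$ whence $\Phi (a)\Phi (\lambda 1_{\mathcal{A}})=\Phi (\lambda 1_{\mathcal{A}})\Phi (a)$; since every $b\in \mathcal{B}$ can be written as $b_{1}+ib_{2}$ with $b_{1},b_{2}$ anti-self-adjoint, $\Phi (\lambda 1_{\mathcal{A}})$ is central, hence lies in $\mathds{C}1_{\mathcal{B}}$ because $\mathcal{B}$ is a factor. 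You correctly guessed that the factor condition and the adjointness structure are the relevant tools, but without step (1) and the per-scalar argument of step (2) the proof does not close.
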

\begin{proof} For arbitrary elements $\lambda \in \mathds{R}$ and $a\in \mathcal{A}$ we have
{\allowdisplaybreaks\begin{eqnarray*}\allowdisplaybreaks
0=\Phi ((i\lambda 1_{\mathcal{A}})a+a(i\lambda 1_{\mathcal{A}})^{*})=\Phi (i\lambda 1_{\mathcal{A}})\Phi (a)+\Phi (a)\Phi (i\lambda 1_{\mathcal{A}})^{*}.
\end{eqnarray*}}
By Lemma \ref{lem24} we obtain $\Phi (i\lambda 1_{\mathcal{A}})\in i\mathds{R}1_{\mathcal{B}}.$ Since $\lambda $ is an arbitrary real number, then we conclude that $\Phi (i\mathds{R}1_{\mathcal{A}})\subseteq i\mathds{R}1_{\mathcal{B}}.$ Note that $\Phi ^{-1}$ has the same properties that $\Phi .$ Applying a similar argument to the above we also conclude that $i\mathds{R}1_{\mathcal{B}}\subseteq \Phi (i\mathds{R}1_{\mathcal{A}}).$ As consequence, we obtain $\Phi (i\mathds{R}1_{\mathcal{A}})=i\mathds{R}1_{\mathcal{B}}.$

Let $a\in \mathcal{A}$ an element satisfying $a^{*}=-a$ (called an {\it anti-self-adjoint element}). Then $0=\Phi (a(i1_{\mathcal{A}})+(i1_{\mathcal{A}})a^{*})=\Phi (a)\Phi (i1_{\mathcal{A}})+\Phi (i1_{\mathcal{A}})\Phi (a)^{*}$ which implies that $\Phi (a)^{*}=-\Phi (a).$ Since $\Phi ^{-1}$ has the same properties of $\Phi ,$ then applying a similar reasoning we also prove that for any element $a\in \mathcal{A},$ satisfying $\Phi (a)^{*}=-\Phi (a),$ then $a^{*}=-a.$ Consequently, $a=-a^{*}$ if and only if $\Phi (a)=-\Phi (a)^{*},$ for any such element $a\in \mathcal{A}.$ From these facts, for arbitrary elements $\lambda \in \mathds{C}$ and $a\in \mathcal{A}$ satisfying $a^{*}=-a,$ we have $0=\Phi (a(\lambda 1_{\mathcal{A}})+(\lambda 1_{\mathcal{A}})a^{*})=\Phi (a)\Phi (\lambda 1_{\mathcal{A}})+\Phi (\lambda 1_{\mathcal{A}})\Phi (a)^{*}$ which implies that $\Phi (a)\Phi (\lambda 1_{\mathcal{A}})=\Phi (\lambda 1_{\mathcal{A}})\Phi (a).$ It follows that $b\Phi (\lambda 1_{\mathcal{A}})=\Phi (\lambda 1_{\mathcal{A}})b,$ for any anti-self-adjoint element $b\in \mathcal{B}.$ As consequence, we have $b\Phi (\lambda 1_{\mathcal{A}})=\Phi (\lambda 1_{\mathcal{A}})b,$ for an arbitrary element $b\in \mathcal{B},$ since we can write any element $b\in \mathcal{B}$ in the form $b=b_{1}+ib_{2},$ where $b_{1}=\frac{b-b^{*}}{2}$ and $b_{2}=\frac{b+b^{*}}{2i}$ are anti-self-adjoint elements of $\mathcal{A}.$ This results that $\Phi (\lambda 1_{\mathcal{A}})\in \mathds{C}1_{\mathcal{B}}.$ It follows that $\Phi (\mathds{C}1_{\mathcal{A}})\subseteq \mathds{C}1_{\mathcal{B}}.$ Since $\Phi ^{-1}$ has the same properties of $\Phi ,$ then applying a similar argument we obtain $\mathds{C}1_{\mathcal{B}}\subseteq \Phi (\mathds{C}1_{\mathcal{A}}).$ Consequently, $\Phi (\mathds{C}1_{\mathcal{A}})=\mathds{C}1_{\mathcal{B}}.$
\end{proof}

\begin{lemma}\label{lem26} $\Phi (1_{\mathcal{A}})=1_{\mathcal{B}}$ and there is a real number $\nu $ satisfying $\nu ^{2}=1$ and such that $\Phi (ia)=i\nu \Phi (a),$ for all element $a\in \mathcal{A}.$
\end{lemma}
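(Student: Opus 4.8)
The plan is to first locate $\Phi(1_{\mathcal A})$ and then manufacture the scalar $\nu$. By Lemma \ref{lem25} I may write $\Phi(1_{\mathcal A})=\mu 1_{\mathcal B}$ with $\mu\in\mathds{C}$. Applying the defining identity to the pair $(1_{\mathcal A},1_{\mathcal A})$ and using additivity (Lemma \ref{lem23}) gives $2\mu=\mu^{2}+|\mu|^{2}$, so $\mu=0$ or $\mathrm{Re}(\mu)=1$; since $\Phi(0)=0$ (Lemma \ref{lem22}) and $\Phi$ is injective, the case $\mu=0$ is impossible. Hence $\mu=1+it$ for some real $t$, and the work is to show $t=0$.

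For this I would use the self-adjoint part. Applying the identity to $(h,1_{\mathcal A})$ for $h=h^{*}$ yields $2\Phi(h)=\mu(\Phi(h)+\Phi(h)^{*})$, whence $\Phi(h)=\mu s(h)$, where $s(h)$ is the self-adjoint part of $\Phi(h)$; the map $s$ is an additive surjection onto the self-adjoint part $\mathcal B_{sa}$. Applying the identity to two self-adjoint elements $h_{1},h_{2}$ and substituting $\Phi(h_{i})=\mu s(h_{i})$, then dividing by $\mu$, gives
\[
s(h_{1}h_{2}+h_{2}h_{1})=\mu\, s(h_{1})\,s(h_{2})+\bar{\mu}\, s(h_{2})\,s(h_{1}).
\]
The left-hand side is symmetric in $h_{1},h_{2}$, so the part that is antisymmetric under the swap must vanish, i.e. $(\mu-\bar{\mu})[s(h_{1}),s(h_{2})]=0$. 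Because $\mathcal B$ is a nontrivial factor it is noncommutative (a nonzero $b_{12}\in\mathcal B_{12}$ does not commute with $p_{1}$), and $s$ is onto $\mathcal B_{sa}$, so some commutator is nonzero; hence $\mu=\bar{\mu}$, $t=0$, and $\Phi(1_{\mathcal A})=1_{\mathcal B}$. With $\mu=1$ the relation above becomes $2\Phi(h)=\Phi(h)+\Phi(h)^{*}$, so $\Phi$ sends self-adjoint elements to self-adjoint elements (and, by Lemma \ref{lem25}, anti-self-adjoint to anti-self-adjoint).

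Next I would build $\nu$. By Lemma \ref{lem25}, $\Phi(i1_{\mathcal A})=i\alpha 1_{\mathcal B}$ with $\alpha\in\mathds{R}$, and $\alpha\neq 0$ by injectivity; put $\nu=\alpha$. Applying the identity to $(a,i1_{\mathcal A})$ gives $\Phi(i(a+a^{*}))=i\alpha(\Phi(a)+\Phi(a)^{*})$, which for self-adjoint $h$ (now $\Phi(h)$ is self-adjoint) reduces to $\Phi(ih)=i\alpha\Phi(h)$. For anti-self-adjoint $k$ the element $ik$ is self-adjoint, so applying the previous formula to $ik$ and using additivity yields $\Phi(ik)=\tfrac{i}{\alpha}\Phi(k)$. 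Writing a general $a=h+k$ with $h$ self-adjoint and $k$ anti-self-adjoint and invoking additivity, the asserted identity $\Phi(ia)=i\nu\Phi(a)$ will hold exactly when the two twists coincide, that is, once $\alpha^{2}=1$.

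The \emph{main obstacle} is precisely this equality $\alpha^{2}=1$. The map $s=\Phi|_{\mathcal A_{sa}}$ is additive and, by the previous paragraph (the displayed relation with $\mu=1$), preserves $h_{1}h_{2}+h_{2}h_{1}$; hence it preserves squares and the triple product $h_{1}h_{2}h_{1}$, and therefore every Jordan polynomial. Using the identity $[h_{1},h_{2}]^{2}=(h_{1}h_{2}+h_{2}h_{1})^{2}-2h_{1}h_{2}^{2}h_{1}-2h_{2}h_{1}^{2}h_{2}$, valid in the associative subalgebra generated by $h_{1},h_{2}$ (Artin), one gets $s([h_{1},h_{2}]^{2})=[H_{1},H_{2}]^{2}$, where $H_{i}=\Phi(h_{i})$. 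On the other hand, applying the identity to $(ih_{1},h_{2})$ gives $\Phi(i[h_{1},h_{2}])=i\alpha[H_{1},H_{2}]$; since $i[h_{1},h_{2}]$ is self-adjoint and $s$ preserves squares, squaring produces $s([h_{1},h_{2}]^{2})=\alpha^{2}[H_{1},H_{2}]^{2}$. Comparing the two values and choosing $h_{1},h_{2}$ with $[H_{1},H_{2}]\neq 0$ (possible since $\mathcal B$ is noncommutative and $s$ is onto $\mathcal B_{sa}$), and noting that a nonzero anti-self-adjoint element has nonzero square by the $C^{*}$-identity $\|u^{*}u\|=\|u\|^{2}$, forces $\alpha^{2}=1$. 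Thus $\nu=\alpha$ satisfies $\nu^{2}=1$ and $\Phi(ia)=i\nu\Phi(a)$ for all $a$, which completes the lemma; every step other than this last matching of twists is a direct use of the defining identity and additivity.
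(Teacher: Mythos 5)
Your argument is correct, but it follows a genuinely different route from the paper's. The paper fixes a non-trivial projection $p\in\mathcal{A}$, shows $\Phi (p)=\lambda q_{1}$ where $q_{1}=\frac{1}{2}(\Phi (p)+\Phi (p)^{*})$ is a projection of $\mathcal{B}$, builds the Peirce decomposition of $\mathcal{B}$ relative to $q_{1}$, and forces $\lambda =1$ and then $\nu ^{2}=1$ by tracking an off-diagonal element $s=pa(1_{\mathcal{A}}-p)$ through the identities $\Phi (s)=\Phi (ps+sp^{*})$, $0=\Phi (sp+ps^{*})$ and $-\Phi (s)=\Phi ((ip)(is)+(is)(ip)^{*})$. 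You instead work entirely on the self-adjoint part: $\mathrm{Re}(\mu )=1$ from the pair $(1_{\mathcal{A}},1_{\mathcal{A}})$, $\mu =\bar{\mu }$ by antisymmetrizing the relation $s(h_{1}h_{2}+h_{2}h_{1})=\mu s(h_{1})s(h_{2})+\bar{\mu }s(h_{2})s(h_{1})$, and $\nu ^{2}=1$ by computing $\Phi ([h_{1},h_{2}]^{2})$ in two ways using the Jordan-homomorphism structure of $\Phi |_{\mathcal{A}_{sa}}$ together with Artin's theorem (which correctly legitimizes the two-generator associative computations in the alternative setting). This buys a proof that never needs to produce a projection in $\mathcal{B}$, at the cost of two assertions you leave implicit and should record: (i) surjectivity of $s$ onto $\mathcal{B}_{sa}$, which follows from additivity, surjectivity of $\Phi $, and the fact from Lemma \ref{lem25} that $\Phi $ preserves anti-self-adjointness in both directions, so that $\tfrac{b+b^{*}}{2}=s(h)$ when $b=\Phi (h)+\Phi (k)$ with $h$ self-adjoint and $k$ anti-self-adjoint; and (ii) noncommutativity of $\mathcal{B}_{sa}$ — your parenthetical appeal to ``$b_{12}\in\mathcal{B}_{12}$'' is slightly circular since no projection of $\mathcal{B}$ is yet available at that point, but the claim follows because $\mathcal{B}=\mathds{C}1_{\mathcal{B}}$ would force $\mathcal{A}=\Phi ^{-1}(\mathds{C}1_{\mathcal{B}})=\mathds{C}1_{\mathcal{A}}$ by Lemma \ref{lem25}, contradicting the existence of the non-trivial projection $p_{1}$, and if all self-adjoint elements commuted then $\mathcal{B}$ would be commutative. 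With those two lines added, your proof is complete.
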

\begin{proof} By Lemma \ref{lem25}, there is a non-zero element $\lambda \in \mathds{C}$ such that $\Phi (1_{\mathcal{A}})=\lambda 1_{\mathcal{B}}.$ Fix a non-trivial projection $p\in \mathcal{A}.$ It follows that $2\Phi (p)=\Phi (2p)=\Phi (p1_{\mathcal{A}}+1_{\mathcal{A}}p^{*})=\Phi (p)\Phi (1_{\mathcal{A}})+\Phi (1_{\mathcal{A}})\Phi (p)^{*}=\lambda (\Phi (p)+\Phi (p)^{*}).$ This allows us to write $\Phi (p)=\lambda q_{1},$ where $q_{1}=\frac{1}{2}(\Phi (p)+\Phi (p)^{*})$ is an self-adjoint element of $\mathcal{B}.$ Also, $2\Phi (p)=\Phi (2p)=\Phi (1_{\mathcal{A}}p+p 1_{\mathcal{A}}^{*})=\Phi (1_{\mathcal{A}})\Phi (p)+\Phi (p)\Phi (1_{\mathcal{A}})^{*}=(\lambda +\overline{\lambda })\Phi (p)$ which results that $\lambda +\overline{\lambda }=2.$ Yet, $2\Phi (p)=\Phi (2p)=\Phi (pp+pp^{*})=\Phi (p)\Phi (p)+\Phi (p)\Phi (p)^{*}=(\lambda ^{2}+\lambda \overline{\lambda })q_{1}^{2}=2\lambda q_{1}^{2}$ which implies that $q_{1}=q_{1}^{2}.$ This show that $q_{1}$ is a non-trivial projection of $\mathcal{B}.$ Set $q_{2}=1_{\mathcal{B}}-q_{1}.$ Then $\mathcal{B}$ has a Peirce decomposition $\mathcal{B}=\mathcal{B}_{11}\oplus \mathcal{B}_{12}\oplus \mathcal{B}_{21}\oplus \mathcal{B}_{22},$ where $\mathcal{B}_{ij}=q_{i}\mathcal{B}q_{j}$ $(i,j=1,2) ,$ satisfying the following multiplicative relations: (i) $\mathcal{B}_{ij}\mathcal{B}_{jl}\subseteq \mathcal{B}_{il}$ $(i,j,l=1,2),$ (ii) $\mathcal{B}_{ij}\mathcal{B}_{ij}\subseteq \mathcal{B}_{ji}$ $(i,j=1,2),$ (iii) $\mathcal{B}_{ij}\mathcal{B}_{kl}=0$ if $j\neq k$ and $(i,j)\neq (k,l)$ $(i,j,k,l=1,2)$ and (iv) $b_{ij}^{2}=0$ for all elements $b_{ij}\in \mathcal{B}_{ij}$ $(i,j=1,2;i\neq j).$ Let $a$ be an element of $\mathcal{A}$ such that $s=pa(1_{\mathcal{A}}-p)$ is a non-zero element. Then $\Phi (s)=\Phi (ps+sp^{*})=\Phi(p)\Phi (s)+\Phi (s)\Phi (p)^{*}=(\lambda q_{1})\Phi (s)+\overline{\lambda }\Phi (s)q_{1}$ which implies that $q_{1}\Phi(s)q_{1}=0,$ $q_{2}\Phi(s)q_{2}=0,$ $(1-\lambda )q_{1}\Phi(s)q_{2}=0$ and $(1-\overline{\lambda })q_{2}\Phi(s)q_{1}=0.$ If $\lambda \neq 1,$ then  $q_{1}\Phi(s)q_{2}=0$ and $q_{2}\Phi(s)q_{1}=0.$ It follows that $\Phi (s)=0$ from which we get $s=0$ and resulting in a contradiction. Thus we must have $\lambda =1$ which leads to $\Phi (1_{\mathcal{A}})=1_{\mathcal{B}}$ and $\Phi (s)=q_{1}\Phi(s)q_{2}+q_{2}\Phi(s)q_{1}.$ Now, we observe that $0=\Phi (sp+ps^{*})=\Phi (s)\Phi (p)+\Phi (p)\Phi (s)^{*}=\Phi (s)q_{1}+q_{1}\Phi (s)^{*}.$ As consequence we obtain $q_{2}\Phi (s)q_{1}=0$ and $\Phi (s)=q_{1}\Phi(s)q_{2}.$ Let $\nu $ be a non-zero real number such that $\Phi (i1_{\mathcal{A}})=i\nu 1_{\mathcal{B}},$ by Lemma \ref{lem25}. Then $2\Phi (ip)=\Phi (p(i1_{\mathcal{A}})+(i1_{\mathcal{A}})p^{*})=\Phi (p)\Phi(i1_{\mathcal{A}})+\Phi (i1_{\mathcal{A}})\Phi (p)^{*}=2i\nu q_{1}$ which implies that $\Phi (ip)=i\nu q_{1}.$ It follows that $\Phi (is)=\Phi ((ip)s+s(ip)^{*})=\Phi(ip)\Phi (s)+\Phi (s)\Phi (ip)^{*}=(i\nu q_{1})\Phi (s)+\Phi (s)(i\nu q_{1})^{*}=i\nu \Phi (s)$ from which we have that $\Phi (is)=i\nu \Phi (s).$ Moreover, as $-\Phi (s)=\Phi(-s)=\Phi ((ip)(is)+(is)(ip)^{*})=\Phi(ip)\Phi (is)+\Phi (is)\Phi (ip)^{*}=(i\nu q_{1})(i\nu \Phi (s))+(i\nu \Phi (s))(i\nu q_{1})^{*}=-\nu ^{2}\Phi (s)$ we still get that $\nu ^{2}=1.$ Let $t$ be an element of $\mathcal{A}$ such that $t=t^{*}.$ Then $2\Phi (t)=\Phi (2t)=\Phi (t1_{\mathcal{A}}+1_{\mathcal{A}}t^{*})=\Phi (t)\Phi (1_{\mathcal{A}})+\Phi (1_{\mathcal{A}})\Phi (t)^{*}=\Phi (t)+\Phi (t)^{*}$ which implies that $\Phi (t)=\Phi (t)^{*}.$ This results that $2\Phi (it)=\Phi (2it)=\Phi (t(i1_{\mathcal{A}})+(i1_{\mathcal{A}})t^{*})=\Phi (t)\Phi (i1_{\mathcal{A}})+\Phi (i1_{\mathcal{A}})\Phi (t)^{*}=\Phi (t)(i\nu 1_{\mathcal{B}})+(i\nu 1_{\mathcal{B}})\Phi (t)^{*}$ which allows us to conclude that $\Phi (it)=i\nu \Phi (t).$ Therefore, for an arbitrary element $a\in \mathcal{A},$ let us write $a=a_{1}+ia_{2},$ where $a_{1}=\frac{a+a^{*}}{2}$ and $a_{2}=\frac{a-a^{*}}{2i}$ are self-adjoint elements of $\mathcal{A}.$ Then $\Phi (ia)=\Phi (ia_{1}-a_{2})=\Phi (ia_{1})-\Phi (a_{2})=i\nu \Phi (a_{1})+i\nu \Phi (ia_{2})=i\nu \Phi (a_{1}+a_{2})=i\nu \Phi (a).$
\end{proof}

\begin{lemma}\label{lem27} $\Phi :\mathcal{A}\rightarrow \mathcal{B}$ is a $\ast $-ring isomorphism.
\end{lemma}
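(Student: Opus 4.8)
The plan is to supply the two properties still missing for a $\ast$-ring isomorphism: preservation of the involution and multiplicativity. Bijectivity is part of the standing hypotheses and additivity was obtained in Claim~\ref{claim28}, so these are the only items left. For the involution I would lean entirely on Lemma~\ref{lem26}, which records that $\Phi$ carries self-adjoint elements to self-adjoint elements and that $\Phi(ia)=i\nu\Phi(a)$ for a real number $\nu$ with $\nu^{2}=1$. Writing an arbitrary $a\in\mathcal{A}$ as $a=a_{1}+ia_{2}$ with $a_{1}=\frac{a+a^{*}}{2}$ and $a_{2}=\frac{a-a^{*}}{2i}$ self-adjoint, additivity gives $\Phi(a)=\Phi(a_{1})+i\nu\Phi(a_{2})$ and $\Phi(a^{*})=\Phi(a_{1})-i\nu\Phi(a_{2})$; since $\Phi(a_{1})$ and $\Phi(a_{2})$ are self-adjoint, $\nu$ is real, and the involution on $\mathcal{B}$ is conjugate-linear, taking adjoints in the first expression reproduces the second, that is, $\Phi(a^{*})=\Phi(a)^{*}$.

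For multiplicativity, the one genuinely new step is to manufacture a companion of the defining identity \eqref{fundident} for the product $ab-ba^{*}$, by feeding $ia$ in place of $a$. Because $(ia)^{*}=-ia^{*}$, one has $(ia)b+b(ia)^{*}=i(ab-ba^{*})$, so applying the hypothesis to the pair $(ia,b)$ gives $\Phi\big(i(ab-ba^{*})\big)=\Phi(ia)\Phi(b)+\Phi(b)\Phi(ia)^{*}$. Now I would rewrite both sides with Lemma~\ref{lem26}: the left side equals $i\nu\Phi(ab-ba^{*})$, while on the right $\Phi(ia)=i\nu\Phi(a)$ and $\Phi(ia)^{*}=-i\nu\Phi(a)^{*}$ (again using that $\nu$ is real), so the right side equals $i\nu\big(\Phi(a)\Phi(b)-\Phi(b)\Phi(a)^{*}\big)$. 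Cancelling the scalar $i\nu$, which is nonzero since $\nu^{2}=1$, yields $\Phi(ab-ba^{*})=\Phi(a)\Phi(b)-\Phi(b)\Phi(a)^{*}$.

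Finally I would add this derived identity to the defining one \eqref{fundident}: by additivity the left-hand sides combine to $\Phi\big((ab+ba^{*})+(ab-ba^{*})\big)=\Phi(2ab)=2\Phi(ab)$, while on the right the terms $\pm\Phi(b)\Phi(a)^{*}$ cancel, leaving $2\Phi(a)\Phi(b)$; hence $\Phi(ab)=\Phi(a)\Phi(b)$ for all $a,b\in\mathcal{A}$. Combined with bijectivity, additivity, and the preservation of the involution obtained above, this shows that $\Phi$ is a $\ast$-ring isomorphism. I expect no serious obstacle beyond the substitution $a\mapsto ia$ that produces the companion identity; the real point requiring care is the bookkeeping of the scalar $\nu$ against the conjugate-linear involution, ensuring that the factor $i\nu$ is common to both sides and therefore cancellable.
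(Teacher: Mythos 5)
Your proposal is correct and follows essentially the same route as the paper: both derive the companion identity $\Phi (ab-ba^{*})=\Phi (a)\Phi (b)-\Phi (b)\Phi (a)^{*}$ from Lemma~\ref{lem26} and combine it with (\ref{fundident}) to obtain $\Phi (ab)=\Phi (a)\Phi (b)$. The only cosmetic differences are that the paper substitutes the pair $(ia,-ib)$ into (\ref{fundident}) where you substitute $(ia,b)$ and cancel the scalar $i\nu $, and that the paper deduces $\Phi (a^{*})=\Phi (a)^{*}$ from $\Phi (ba^{*})=\Phi (b)\Phi (a)^{*}$ with $b=1_{\mathcal{A}}$ rather than from the self-adjoint decomposition (the preservation of self-adjoint elements you invoke is established inside the proof of Lemma~\ref{lem26} rather than in its statement, but it follows immediately from $\Phi (1_{\mathcal{A}})=1_{\mathcal{B}}$, additivity and the identity $\Phi (t1_{\mathcal{A}}+1_{\mathcal{A}}t^{*})=\Phi (t)+\Phi (t)^{*}$).
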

\begin{proof} By Lemma \ref{lem26}, for arbitrary elements $a,b\in \mathcal{A},$ we have
{\allowdisplaybreaks\begin{multline}\allowdisplaybreaks\label{fundident2}
\Phi (ab-ba^{*})=\Phi ((ia)(-ib)+(-ib)(ia)^{*})=\Phi (ia)\Phi (-ib)\\+\Phi (-ib)\Phi (ia)^{*}
= (i\nu \Phi (a))(-i\nu\Phi (b))+(-i\nu\Phi (b))(i\nu \Phi (a))^{*}\\
= \Phi (a)\Phi (b)-\Phi (b)\Phi (a)^{*}.
\end{multline}}
Hence, from (\ref{fundident}) and (\ref{fundident2}) we get $\Phi (ab)=\Phi (a)\Phi (b)$ and $\Phi (ba^{*})=\Phi (b)\Phi (a)^{*}.$ As result of the last equation, we have $\Phi (a^{*})=\Phi (1_{\mathcal{A}}a^{*})=\Phi (1_{\mathcal{A}})\Phi (a)^{*}=1_{\mathcal{B}}\Phi (a)^{*}=\Phi (a)^{*},$ for all elements $a\in \mathcal{A}.$ Therefore, we can conclude that $\Phi $ is a $\ast$-ring isomorphism.
\end{proof}

The Main Theorem is proved.

\noindent Second case: the product $ab-ba^{*}.$ 

\begin{lemma}\label{lem255} $\Phi (\mathds{R}1_{\mathcal{A}})=\mathds{R}1_{\mathcal{B}},$ the map $\Phi $ preserves self-adjoint elements in both directions and $\Phi (\mathds{C}1_{\mathcal{A}})=\mathds{C}1_{\mathcal{B}}.$
\end{lemma}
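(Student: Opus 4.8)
The plan is to transcribe the argument of Lemma \ref{lem25} with the product $ab+ba^{*}$ replaced by $ab-ba^{*}$, trading the role of the anti-self-adjoint central unit $i1_{\mathcal{A}}$ for the self-adjoint unit $1_{\mathcal{A}}$ and invoking the second-case version of Lemma \ref{lem24}. First I would establish $\Phi(\mathds{R}1_{\mathcal{A}})=\mathds{R}1_{\mathcal{B}}$. For $\lambda\in\mathds{R}$ and an arbitrary element $a\in\mathcal{A}$ one has $(\lambda 1_{\mathcal{A}})a-a(\lambda 1_{\mathcal{A}})^{*}=\lambda a-\lambda a=0$, since $(\lambda 1_{\mathcal{A}})^{*}=\lambda 1_{\mathcal{A}}$ and $\lambda 1_{\mathcal{A}}$ is central. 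Applying $\Phi$ yields $\Phi(\lambda 1_{\mathcal{A}})\Phi(a)-\Phi(a)\Phi(\lambda 1_{\mathcal{A}})^{*}=0$; by surjectivity of $\Phi$ this reads $\Phi(\lambda 1_{\mathcal{A}})v-v\Phi(\lambda 1_{\mathcal{A}})^{*}=0$ for every $v\in\mathcal{B}$, so Lemma \ref{lem24} (second case) gives $\Phi(\lambda 1_{\mathcal{A}})\in\mathds{R}1_{\mathcal{B}}$ and hence $\Phi(\mathds{R}1_{\mathcal{A}})\subseteq\mathds{R}1_{\mathcal{B}}$. Because $\Phi^{-1}$ is again a bijection preserving $ab-ba^{*}$, the same argument applied to $\Phi^{-1}$ gives the reverse inclusion, so equality holds.

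Next I would prove that $\Phi$ preserves self-adjoint elements in both directions. By the previous step there is a real number $\beta$ with $\Phi(1_{\mathcal{A}})=\beta 1_{\mathcal{B}}$, and $\beta\neq 0$ since $\Phi$ is injective with $\Phi(0)=0$. For any self-adjoint $a$ we have $a1_{\mathcal{A}}-1_{\mathcal{A}}a^{*}=a-a^{*}=0$, so $0=\Phi(a)\Phi(1_{\mathcal{A}})-\Phi(1_{\mathcal{A}})\Phi(a)^{*}=\beta(\Phi(a)-\Phi(a)^{*})$, which forces $\Phi(a)=\Phi(a)^{*}$. The converse direction follows by applying the same reasoning to $\Phi^{-1}$. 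In particular $\Phi$ carries the self-adjoint part of $\mathcal{A}$ bijectively onto the self-adjoint part of $\mathcal{B}$.

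Finally, to obtain $\Phi(\mathds{C}1_{\mathcal{A}})=\mathds{C}1_{\mathcal{B}}$ I would fix $\lambda\in\mathds{C}$ and a self-adjoint element $a$, and use the identity $a(\lambda 1_{\mathcal{A}})-(\lambda 1_{\mathcal{A}})a^{*}=\lambda a-\lambda a^{*}=0$ to get $\Phi(a)\Phi(\lambda 1_{\mathcal{A}})-\Phi(\lambda 1_{\mathcal{A}})\Phi(a)^{*}=0$. Since $\Phi(a)^{*}=\Phi(a)$ by the self-adjoint preservation just established, this says that $\Phi(\lambda 1_{\mathcal{A}})$ commutes with $\Phi(a)$. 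As $a$ ranges over all self-adjoint elements of $\mathcal{A}$, the images $\Phi(a)$ range over all self-adjoint elements of $\mathcal{B}$, and every $b\in\mathcal{B}$ can be written $b=b_{1}+ib_{2}$ with $b_{1}=\frac{b+b^{*}}{2}$ and $b_{2}=\frac{b-b^{*}}{2i}$ self-adjoint; hence $\Phi(\lambda 1_{\mathcal{A}})$ commutes with every element of $\mathcal{B}$, so it is central and therefore lies in $\mathds{C}1_{\mathcal{B}}$ because $\mathcal{B}$ is a factor. This proves $\Phi(\mathds{C}1_{\mathcal{A}})\subseteq\mathds{C}1_{\mathcal{B}}$, and the same argument applied to $\Phi^{-1}$ upgrades this to equality.

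I expect the only delicate bookkeeping to be the verification that $\Phi^{-1}$ inherits the property of preserving $ab-ba^{*}$ and that self-adjointness is established in both directions before it is used in the centrality argument; everything else is a direct translation of Lemma \ref{lem25} with the sign flipped and $i1_{\mathcal{A}}$ replaced by $1_{\mathcal{A}}$. No genuine obstacle arises, since the non-associativity intervenes only through the already-recorded Peirce multiplicative relations and the factor hypothesis (center $=\mathds{C}1$), both of which are used exactly as in the associative case.
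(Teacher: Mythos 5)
Your proposal is correct and follows essentially the same route as the paper's proof: the identity $(\alpha 1_{\mathcal{A}})b-b(\alpha 1_{\mathcal{A}})^{*}=0$ with Lemma \ref{lem24} for the real scalars, the identity $a1_{\mathcal{A}}-1_{\mathcal{A}}a^{*}=0$ for self-adjointness, the commutation argument with self-adjoint elements plus the decomposition $b=b_{1}+ib_{2}$ and the factor hypothesis for $\mathds{C}1$, and $\Phi^{-1}$ for the reverse inclusions. Your explicit justification that $\Phi(1_{\mathcal{A}})=\beta 1_{\mathcal{B}}$ with $\beta\neq 0$ is a detail the paper leaves implicit, but it is the intended reasoning.
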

\begin{proof} First of all, note that for any elements $a,b\in \mathcal{A}$ holds $ab-ba^{*}=0$ if and only if $\Phi (a)\Phi (b)-\Phi (a)\Phi (b)^{*}=0.$ Hence, for any elements $\alpha \in \mathds{R}$ and $b\in \mathcal{A},$ we have $0=\Phi ((\alpha 1_{\mathcal{A}})b-b(\alpha 1_{\mathcal{A}})^{*})=\Phi (\alpha 1_{\mathcal{A}})\Phi (b)-\Phi (b)\Phi (\alpha 1_{\mathcal{A}})^{*}=0$ from which we get $\Phi (\alpha 1_{\mathcal{A}})\in \mathds{R}1_{\mathcal{B}},$ by Lemma \ref{lem24}. As $\alpha $ is any element, we conclude that $\Phi (\mathds{R}1_{\mathcal{A}})\subseteq \mathds{R}1_{\mathcal{B}}.$ Since $\Phi ^{-1}$ has the same properties of $\Phi ,$ then by a similar argument we prove that $\mathds{R}1_{\mathcal{B}}\subseteq \Phi (\mathds{R}1_{\mathcal{A}}).$ Consequently, we obtain $\Phi (\mathds{R}1_{\mathcal{A}})=\mathds{R}1_{\mathcal{B}}.$

Let $a$ be an element of $\mathcal{A}$ satisfying $a=a^{*}$ (called a {\it sef-adjoint element}). Then $0=\Phi (a1_{\mathcal{A}}-1_{\mathcal{A}}a^{*})=\Phi (a)\Phi (1_{\mathcal{A}})-\Phi (1_{\mathcal{A}})\Phi (a)^{*}$ from which follows that $\Phi (a)=\Phi (a)^{*}.$ Since $\Phi ^{-1}$ has the same properties of $\Phi ,$ then by a similar argument we prove that if $\Phi (a)=\Phi (a)^{*},$ then $a=a^{*},$ for all such elements $a\in \mathcal{A}.$ Consequently, $a=a^{*}$ if and only if $\Phi (a)=\Phi (a)^{*},$ for such elements $a\in \mathcal{A}.$

For any elements $\lambda \in \mathds{C}$ and $a\in \mathcal{A}$ satisfying $a=a^{*},$ we have $0=\Phi (a(\lambda 1_{\mathcal{A}})-(\lambda 1_{\mathcal{A}})a^{*})=\Phi (a)\Phi (\lambda 1_{\mathcal{A}})-\Phi (\lambda 1_{\mathcal{A}})\Phi (a)^{*}$ which implies that $\Phi (a)\Phi (\lambda 1_{\mathcal{A}})-\Phi (\lambda 1_{\mathcal{A}})\Phi (a)=0.$ This results that $b\Phi (\lambda 1_{\mathcal{A}})-\Phi (\lambda 1_{\mathcal{A}})b=0,$ for any sel-adjoint element $b\in \mathcal{B}.$ As a consequence, we obtain $b\Phi (\lambda 1_{\mathcal{A}})-\Phi (\lambda 1_{\mathcal{A}})b=0,$ for any element $b\in \mathcal{B},$ since any element $b\in \mathcal{B}$ can be written as a sum $b=b_{1}+ib_{2},$ where $b_{1}=\frac{b+b^{*}}{2}$ and $b_{2}=\frac{b-b^{*}}{2i},$ are self-adjoint elements of $\mathcal{B}.$ This shows that $\Phi (\lambda 1_{\mathcal{A}})\in \mathds{C}1_{\mathcal{B}}.$ As $\lambda $ is any element we conclude that $\Phi (\mathds{C}1_{\mathcal{A}})\subseteq \mathds{C}1_{\mathcal{B}}.$ Applying a similar argument we obtain $\mathds{C}1_{\mathcal{B}}\subseteq \Phi (\mathds{C}1_{\mathcal{A}}).$ Consequently, $\Phi (\mathds{C}1_{\mathcal{A}})=\mathds{C}1_{\mathcal{B}}.$
\end{proof}

\begin{lemma}\label{lem266} $\Phi (ia)=i\Phi (a),$ for all elements $a\in \mathcal{A},$ or $\Phi (ia)=-i\Phi (a),$ for all elements $a\in \mathcal{A}.$
\end{lemma}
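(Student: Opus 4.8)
The plan is to leverage the three facts already recorded in Lemma \ref{lem255}: that $\Phi(\mathds{C}1_{\mathcal{A}})=\mathds{C}1_{\mathcal{B}}$, that $\Phi(\mathds{R}1_{\mathcal{A}})=\mathds{R}1_{\mathcal{B}}$, and that $\Phi$ preserves self-adjoint elements in both directions. Since $1_{\mathcal{A}}$ is self-adjoint, write $\Phi(1_{\mathcal{A}})=\lambda 1_{\mathcal{B}}$ with $\lambda\in\mathds{R}$, and since $i1_{\mathcal{A}}\in\mathds{C}1_{\mathcal{A}}$ write $\Phi(i1_{\mathcal{A}})=\mu 1_{\mathcal{B}}$ with $\mu=\alpha+i\beta\in\mathds{C}$. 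The whole argument then reduces to determining $\lambda$ and $\mu$ and transporting the resulting scalar identity from self-adjoint elements to all of $\mathcal{A}$. Throughout I use the defining identity in its subtraction form, $\Phi(xy-yx^{*})=\Phi(x)\Phi(y)-\Phi(y)\Phi(x)^{*}$.

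First I would compute the action on $ia$ for self-adjoint $a$. Taking $x=i1_{\mathcal{A}}$ and $y=a$ and using $(i1_{\mathcal{A}})a-a(i1_{\mathcal{A}})^{*}=2ia$ together with the scalar value of $\Phi(i1_{\mathcal{A}})$ (which commutes and has conjugate $\overline{\mu}$), I obtain $2\Phi(ia)=(\mu-\overline{\mu})\Phi(a)$, hence $\Phi(ia)=i\beta\Phi(a)$ for every self-adjoint $a$. Specializing to $a=1_{\mathcal{A}}$ and comparing with $\Phi(i1_{\mathcal{A}})=(\alpha+i\beta)1_{\mathcal{B}}$ forces $\alpha=0$ and $\beta=\beta\lambda$; since $\Phi$ is injective and $i1_{\mathcal{A}}\neq 0$ we have $\mu\neq 0$, so $\beta\neq 0$ and therefore $\lambda=1$. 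This already yields $\Phi(1_{\mathcal{A}})=1_{\mathcal{B}}$ and $\Phi(i1_{\mathcal{A}})=i\beta 1_{\mathcal{B}}$.

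Next I would pin down $\beta$ by applying the defining identity to the pair $(i1_{\mathcal{A}},ia)$ with $a$ self-adjoint. The elementary identity $(i1_{\mathcal{A}})(ia)-(ia)(i1_{\mathcal{A}})^{*}=-2a$, combined with $\Phi(i1_{\mathcal{A}})=i\beta 1_{\mathcal{B}}$ and $\Phi(ia)=i\beta\Phi(a)$, gives $-2\Phi(a)=-2\beta^{2}\Phi(a)$, so $\beta^{2}=1$. Setting $\nu=\beta\in\{-1,1\}$, I have established $\Phi(ia)=i\nu\Phi(a)$ for all self-adjoint $a$. To reach an arbitrary $a\in\mathcal{A}$, decompose $a=a_{1}+ia_{2}$ with $a_{1}=\frac{a+a^{*}}{2}$ and $a_{2}=\frac{a-a^{*}}{2i}$ self-adjoint, so that $ia=ia_{1}-a_{2}$; then additivity of $\Phi$ (Claim \ref{claim28}), the self-adjoint relation, and $\nu^{2}=1$ give $\Phi(ia)=i\nu\Phi(a_{1})-\Phi(a_{2})=i\nu\Phi(a)$, which is exactly the claimed dichotomy ($\nu=1$ or $\nu=-1$).

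The main obstacle is the first stage: extracting $\lambda$ and $\mu$ depends on choosing the pairs in the defining identity so that the unknown conjugate terms $\Phi(\cdot)^{*}$ either collapse into scalars or cancel, and this is precisely where preservation of self-adjointness from Lemma \ref{lem255} is indispensable. A second delicate point is the use of injectivity to exclude the degenerate possibility $\beta=0$, without which one could not conclude $\lambda=1$. By contrast, the final passage from self-adjoint elements to all of $\mathcal{A}$ is routine, though it genuinely relies on the relation $\nu^{2}=1$ proved in the second stage.
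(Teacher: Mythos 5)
Your proof is correct, and while it rests on the same foundation as the paper's (Lemma \ref{lem255} reduces everything to pinning down $\Phi$ on scalar multiples of the identity, after which one application of the defining identity transports the scalar relation to all of $\mathcal{A}$), the route through the scalar computation is genuinely different. The paper works with the four elements $\Phi(\pm\tfrac12 1_{\mathcal{A}})$ and $\Phi(\pm\tfrac12 i1_{\mathcal{A}})$: it first shows $\Phi(\pm\tfrac12 i1_{\mathcal{A}})\in i\mathds{R}1_{\mathcal{B}}$ via an identity involving $\Phi(\tfrac12 1_{\mathcal{A}})^{-1}$, then solves the small system of scalar equations (\ref{num22})--(\ref{num33}) to get $\Phi(\tfrac12 i1_{\mathcal{A}})=\pm\tfrac12 i1_{\mathcal{B}}$ outright, and only then applies the identity to a general $a$. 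You instead derive the functional relation $\Phi(ia)=i\beta\Phi(a)$ with $\beta$ still unknown, extract $\lambda=1$ and $\alpha=0$ by specializing to $a=1_{\mathcal{A}}$, and then force $\beta^{2}=1$ by a second application to the pair $(i1_{\mathcal{A}},ia)$. Your version is arguably cleaner: it avoids inverting $\Phi(\tfrac12 1_{\mathcal{A}})$ and it delivers $\Phi(1_{\mathcal{A}})=1_{\mathcal{B}}$ as a byproduct, which the paper only obtains in the following lemma.

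Two small remarks. First, the identity $(i1_{\mathcal{A}})a-a(i1_{\mathcal{A}})^{*}=ia-a(-i1_{\mathcal{A}})=2ia$ holds for \emph{every} $a\in\mathcal{A}$, not only for self-adjoint $a$; so your first stage already yields $\Phi(ia)=i\beta\Phi(a)$ on all of $\mathcal{A}$, and the entire final passage through the decomposition $a=a_{1}+ia_{2}$ is redundant (harmless, but unnecessary). Second, and relatedly, your closing claim that the preservation of self-adjointness from Lemma \ref{lem255} is ``indispensable'' is not borne out by your own argument: the only inputs you actually use are $\Phi(\mathds{R}1_{\mathcal{A}})=\mathds{R}1_{\mathcal{B}}$, $\Phi(\mathds{C}1_{\mathcal{A}})=\mathds{C}1_{\mathcal{B}}$, additivity (Claim \ref{claim28}), and injectivity together with $\Phi(0)=0$ (Lemma \ref{lem22}) to rule out $\beta=0$. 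Neither point affects the validity of the proof.
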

\begin{proof} According to the hypothesis on $\Phi $ and by Lemma \ref{lem255}, we have $\Phi (\pm \frac{1}{2}i1_{\mathcal{A}})\in (\mathds{C}\setminus \mathds{R})1_{\mathcal{B}}$ and $\Phi (\pm \frac{1}{2}1_{\mathcal{A}})\in \mathds{R}1_{\mathcal{B}}.$ As $\pm \frac{1}{2}i1_{\mathcal{A}}=(\pm \frac{1}{2}i1_{\mathcal{A}})(\frac{1}{2}1_{\mathcal{A}})-(\frac{1}{2}1_{\mathcal{A}})(\pm \frac{1}{2}i1_{\mathcal{A}})^{*},$ then
{\allowdisplaybreaks\begin{multline*}\allowdisplaybreaks
\Phi (\pm \frac{1}{2}i1_{\mathcal{A}})=\Phi ((\pm \frac{1}{2}i1_{\mathcal{A}})(\frac{1}{2}1_{\mathcal{A}})-(\frac{1}{2}1_{\mathcal{A}})(\pm \frac{1}{2}i1_{\mathcal{A}})^{*})\\=\Phi (\pm \frac{1}{2}i1_{\mathcal{A}})\Phi (\frac{1}{2}1_{\mathcal{A}})-\Phi (\frac{1}{2}1_{\mathcal{A}})\Phi (\pm \frac{1}{2}i1_{\mathcal{A}})^{*}
\end{multline*}}
from which we get $\Phi (\pm \frac{1}{2}i1_{\mathcal{A}})^{*}=(\Phi (\frac{1}{2}1_{\mathcal{A}})-1_{\mathcal{B}})\Phi (\frac{1}{2}1_{\mathcal{A}})^{-1}\Phi (\pm \frac{1}{2}i1_{\mathcal{A}}).$ This results that $\Phi (\pm \frac{1}{2}i1_{\mathcal{A}})\in \mathds{R}i1_{\mathcal{B}}.$ As consequence we have 
{\allowdisplaybreaks\begin{multline}\allowdisplaybreaks \label{num22}
\Phi (\frac{1}{2}i1_{\mathcal{A}})=\Phi ((-\frac{1}{2}i1_{\mathcal{A}})(-\frac{1}{2}1_{\mathcal{A}})-(-\frac{1}{2}1_{\mathcal{A}})(-\frac{1}{2}i1_{\mathcal{A}})^{*})\\=\Phi (-\frac{1}{2}i1_{\mathcal{A}})\Phi (-\frac{1}{2}1_{\mathcal{A}})-\Phi (-\frac{1}{2}1_{\mathcal{A}})\Phi (-\frac{1}{2}i1_{\mathcal{A}})^{*}\\=2\Phi (-\frac{1}{2}1_{\mathcal{A}})\Phi (-\frac{1}{2}i1_{\mathcal{A}}),
\end{multline}}
since $\Phi (\pm \frac{1}{2}i1_{\mathcal{A}})^{*}=-\Phi (\pm \frac{1}{2}i1_{\mathcal{A}}).$ Also,
{\allowdisplaybreaks\begin{multline}\allowdisplaybreaks \label{num33}
\Phi (-\frac{1}{2}1_{\mathcal{A}})=\Phi ((\frac{1}{2}i1_{\mathcal{A}})(\frac{1}{2}i1_{\mathcal{A}})-(\frac{1}{2}i1_{\mathcal{A}})(\frac{1}{2}i1_{\mathcal{A}})^{*})\\=\Phi (\frac{1}{2}i1_{\mathcal{A}})\Phi (\frac{1}{2}i1_{\mathcal{A}})-\Phi (\frac{1}{2}i1_{\mathcal{A}})\Phi(\frac{1}{2}i1_{\mathcal{A}})^{*}=2\Phi (\frac{1}{2}i1_{\mathcal{A}})^{2}
\end{multline}}

From (\ref{num22}) and (\ref{num33}) we conclude that $\Phi (-\frac{1}{2}1_{\mathcal{A}})=-\frac{1}{2}1_{\mathcal{A}}$ and $\Phi (\frac{1}{2}i1_{\mathcal{A}})=\pm \frac{1}{2}i1_{\mathcal{A}}.$ Thus, for any element $a\in \mathcal{A},$ we have that $\Phi (ia)=\Phi ((\frac{1}{2}i1_{\mathcal{A}})a-a(\frac{1}{2}i1_{\mathcal{A}})^{*})=\Phi (\frac{1}{2}i1_{\mathcal{A}})\Phi (a)-\Phi (a)\Phi (\frac{1}{2}i1_{\mathcal{A}})^{*}=\big(\Phi (\frac{1}{2}i1_{\mathcal{A}})-\Phi (\frac{1}{2}i1_{\mathcal{A}})^{*}\big)\Phi (a)=2\Phi (\frac{1}{2}i1_{\mathcal{A}})\Phi (a).$ Consequently, $\Phi (ia)=i\Phi (a),$ for all elements $a\in \mathcal{A},$ or $\Phi (ia)=-i\Phi (a),$ for all elements $a\in \mathcal{A}.$
\end{proof}

\begin{lemma} $\Phi :\mathcal{A}\rightarrow \mathcal{B}$ is a $\ast $-ring isomorphism.
\end{lemma}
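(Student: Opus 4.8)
The plan is to exploit the three facts already secured for the product $ab-ba^{*}$: that $\Phi$ is additive (Lemma \ref{lem23}), that $\Phi(1_{\mathcal{A}})=1_{\mathcal{B}}$ together with the dichotomy $\Phi(ia)=\pm i\Phi(a)$ for all $a$ (Lemma \ref{lem266}, the identity on $1_{\mathcal{B}}$ following from additivity), and the defining relation $\Phi(ab-ba^{*})=\Phi(a)\Phi(b)-\Phi(b)\Phi(a)^{*}$. Following the template of Lemma \ref{lem27}, the goal is first to recover the companion identity for the product $ab+ba^{*}$, and then to extract multiplicativity and involution-preservation by combining the two identities through additivity.

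First I would apply the defining relation to the pair $(ia,-ib)$ in place of $(a,b)$. Since scalars multiply and associate freely even in a non-associative algebra, and since the involution is conjugate-linear so that $(ia)^{*}=-ia^{*}$, one computes $(ia)(-ib)=ab$ and $(-ib)(ia)^{*}=-ba^{*}$, whence $(ia)(-ib)-(-ib)(ia)^{*}=ab+ba^{*}$. The defining relation then yields
\begin{equation*}
\Phi(ab+ba^{*})=\Phi(ia)\Phi(-ib)-\Phi(-ib)\Phi(ia)^{*}.
\end{equation*}
Next I would insert the homogeneity relation from Lemma \ref{lem266}. In the case $\Phi(ia)=i\Phi(a)$ one has $\Phi(-ib)=-i\Phi(b)$ by additivity, and the right-hand side collapses to $\Phi(a)\Phi(b)+\Phi(b)\Phi(a)^{*}$; in the case $\Phi(ia)=-i\Phi(a)$ the two signs flip together, so every factor of $i$ still enters quadratically and the right-hand side is unchanged. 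Thus in either case
\begin{equation*}
\Phi(ab+ba^{*})=\Phi(a)\Phi(b)+\Phi(b)\Phi(a)^{*}.
\end{equation*}

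With both identities $\Phi(ab\pm ba^{*})=\Phi(a)\Phi(b)\pm\Phi(b)\Phi(a)^{*}$ in hand, I would add and subtract them. Adding and applying additivity on the left gives $2\Phi(ab)=2\Phi(a)\Phi(b)$, hence $\Phi(ab)=\Phi(a)\Phi(b)$; subtracting gives $2\Phi(ba^{*})=2\Phi(b)\Phi(a)^{*}$, hence $\Phi(ba^{*})=\Phi(b)\Phi(a)^{*}$. Specializing the latter to $b=1_{\mathcal{A}}$ and using $\Phi(1_{\mathcal{A}})=1_{\mathcal{B}}$ produces $\Phi(a^{*})=\Phi(a)^{*}$. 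Since $\Phi$ is then an additive bijection preserving both the product and the involution, it is a $\ast$-ring isomorphism.

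The only genuinely delicate point is the passage that manufactures the $ab+ba^{*}$ identity out of the $ab-ba^{*}$ one. It rests entirely on the conjugate-linearity of the involution, which sends the scalar $i$ to $-i$ under $\ast$, and on the fact that Lemma \ref{lem266} guarantees a \emph{global} sign, the same for every element, so that the two substituted factors of $i$ never disagree and always combine to an even power. Everything after that step is forced by additivity and is routine.
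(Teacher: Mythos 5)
Your proposal is correct and follows essentially the same route as the paper: substitute the pair $(ia,-ib)$ into the defining relation, use Lemma \ref{lem266} to collapse the right-hand side to $\Phi(a)\Phi(b)+\Phi(b)\Phi(a)^{*}$, and then combine the two identities via additivity to obtain $\Phi(ab)=\Phi(a)\Phi(b)$ and $\Phi(ba^{*})=\Phi(b)\Phi(a)^{*}$, hence $\Phi(a^{*})=\Phi(a)^{*}$. The only cosmetic difference is that you handle both signs of the dichotomy at once by noting the factor of $i$ enters quadratically, whereas the paper treats the two cases separately.
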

\begin{proof} Two cases are considered. First case: $\Phi (ia)=i\Phi (a),$ for all elements $a\in \mathcal{A}.$ For any elements $a,b\in \mathcal{A},$ we have
{\allowdisplaybreaks\begin{multline}\allowdisplaybreaks\label{fundident22}
\Phi (ab+ba^{*})=\Phi ((ia)(-ib)-(-ib)(ia)^{*})=\Phi (ia)\Phi (-ib)\\-\Phi (-ib)\Phi (ia)^{*}
=(i\Phi (a))(-i\Phi (b))-(-i\Phi (b))(i\Phi (a))^{*}\\
= \Phi (a)\Phi (b)+\Phi (b)\Phi (a)^{*}.
\end{multline}}
From (\ref{fundident}) and (\ref{fundident22}) we get $\Phi (ab)=\Phi (a)\Phi (b)$ and $\Phi (ba^{*})=\Phi (b)\Phi (a)^{*}.$ It follows that $\Phi (1_{\mathcal{A}})=1_{\mathcal{B}}$ and $\Phi (a^{*})=\Phi (1_{\mathcal{A}}a^{*})=\Phi (1_{\mathcal{A}})\Phi (a)^{*}=1_{\mathcal{B}}\Phi (a)^{*}=\Phi (a)^{*},$ for any element $a\in \mathcal{A}.$ This allows us to conclude that $\Phi $ is a $\ast$-ring isomorphism. Second case: $\Phi (ia)=-i\Phi (a),$ for all elements $a\in \mathcal{A}.$ The proof is entirely similar as in the first case.
\end{proof}

The Main Theorem is proved.


\end{document}